\newtheorem{theorem}{Theorem}[section]
\newtheorem{proposition}[theorem]{Proposition}
\newtheorem{lemma}[theorem]{Lemma}
\theoremstyle{definition}
\newtheorem{definition}[theorem]{Definition}
\newtheorem{remark}[theorem]{Remark}
\numberwithin{equation}{section}
\begin{document}

\title[transformations and values of $p$-adic hypergeometric functions]
{Certain transformations and values of $p$-adic hypergeometric functions}

%    Only \author and \address are required; other information is
%    optional.  Remove any unused author tags.

%    author one information
 %\author[short version for running head]{}
 \author{Sulakashna}
\address{Department of Mathematics, Indian Institute of Technology Guwahati, North Guwahati, Guwahati-781039, Assam, INDIA}
\curraddr{}
\email{sulakash@iitg.ac.in}
\author{Rupam Barman}
\address{Department of Mathematics, Indian Institute of Technology Guwahati, North Guwahati, Guwahati-781039, Assam, INDIA}
\curraddr{}
\email{rupam@iitg.ac.in}

\thanks{}

%    author two information

%    \subjclass is required.
\subjclass[2010]{33E50, 33C99, 11S80, 11T24.}
\date{20th April 2022, version-1}
\keywords{character sum; hypergeometric series; $p$-adic gamma function.}
%\thanks{Acknowledement: We thank the referee for his/her valuable comments.}
%    Abstract is required.
\begin{abstract} We prove two transformations for the $p$-adic hypergeometric functions which can be described as $p$-adic analogues of a Euler's transformation and a
transformation of Clausen. We first evaluate certain character sums, and then relate them to the $p$-adic hypergeometric functions to deduce the transformations. 
We use a character sum identity proved by Ahlgren, Ono, and Penniston to deduce the $p$-adic Clausen's transformation. We also deduce special values of certain $p$-adic hypergeometric functions.
\end{abstract}
\maketitle
\section{Introduction and statement of results}
For a non-negative integer $r$, and $a_i, b_i\in\mathbb{C}$ with $b_i\notin\{\ldots, -3,-2,-1\}$, 
the classical hypergeometric series ${_{r+1}}F_{r}$ is defined by
\begin{align}
{_{r+1}}F_{r}\left(\begin{array}{cccc}
                   a_0, & a_1, & \ldots, & a_{r} \\
                    & b_1, & \ldots, & b_r
                 \end{array}| \lambda
\right):=\sum_{k=0}^{\infty}\frac{(a_0)_k\cdots (a_{r})_k}{(b_1)_k\cdots(b_r)_k}\cdot\frac{\lambda^k}{k!},\notag
\end{align}
where, for a complex number $a$, the rising factorial or the Pochhammer symbol $(a)_k$ is defined as $(a)_0=1$ and $(a)_k=a(a+1)\cdots (a+k-1), ~k\geq 1$. It is well-known that the classical hypergeometric series ${_{r+1}}F_{r}$ converges absolutely for $|\lambda|<1$.
\par In recent times, many authors have studied certain finite field analogues of the classical hypergeometric series. It seems that hypergeometric functions over a finite field first appeared in Koblitz's work \cite{kob2}. 
There are other definitions of hypergeometric functions over finite fields. For example, see the works of Greene \cite{greene, greene2}, Katz \cite{katz}, McCarthy \cite{mccarthy3}, Fuselier et al. \cite{FL}, and Otsubo \cite{noriyuki}. 
Some of the biggest motivations for studying finite field hypergeometric functions have been their connections with Fourier coefficients and eigenvalues of modular forms and with counting points on certain kinds of algebraic varieties. 
For example, see \cite{ahlgren, BK, BK1, evans-mod, frechette, fuselier, Fuselier-McCarthy, koike, lennon, lennon2, mccarthy4, mortenson, mc-papanikolas, ono, salerno, vega}. 
\par 
Results involving finite field hypergeometric functions are often restricted to primes in certain congruence classes to facilitate the existence of characters of specific orders. To overcome these restrictions, McCarthy \cite{mccarthy1, mccarthy2} 
defined a function in terms of quotients of the $p$-adic gamma function. For an odd prime $p$, let $\mathbb{F}_q$ denote the finite field with $q$ elements, where $q=p^r, r\geq 1$. Let $\mathbb{Z}_p$ denote the ring of $p$-adic integers. 
Let $\Gamma_p(.)$ denote the Morita's $p$-adic gamma function, and let $\omega$ denote the 
Teichm\"{u}ller character of $\mathbb{F}_q$. We denote by $\overline{\omega}$ the inverse of $\omega$. For $x \in \mathbb{Q}$, we let $\lfloor x\rfloor$ denote the greatest integer less than
or equal to $x$ and $\langle x\rangle$ denote the fractional part of $x$, i.e., $x-\lfloor x\rfloor$, satisfying $0\leq\langle x\rangle<1$.
We now recall the definition of McCarthy's $p$-adic hypergeometric function $_{n}G_{n}[\cdots]_q$.
\begin{definition}\cite[Definition 5.1]{mccarthy2} \label{defin1}
	Let $p$ be an odd prime and $q=p^r$, $r\geq 1$. Let $t \in \mathbb{F}_q$.
	For positive integer $n$ and $1\leq k\leq n$, let $a_k$, $b_k$ $\in \mathbb{Q}\cap \mathbb{Z}_p$.
	Then the function $_{n}G_{n}[\cdots]_q$ is defined by
	\begin{align}
	&_nG_n\left[\begin{array}{cccc}
	a_1, & a_2, & \ldots, & a_n \\
	b_1, & b_2, & \ldots, & b_n
	\end{array}|t
	\right]_q\notag\\
	&\hspace{1cm}:=\frac{-1}{q-1}\sum_{a=0}^{q-2}(-1)^{an}~~\overline{\omega}^a(t)
	\prod\limits_{k=1}^n\prod\limits_{i=0}^{r-1}(-p)^{-\lfloor \langle a_kp^i \rangle-\frac{ap^i}{q-1} \rfloor -\lfloor\langle -b_kp^i \rangle +\frac{ap^i}{q-1}\rfloor}\notag\\
	&\hspace{2cm} \times \frac{\Gamma_p(\langle (a_k-\frac{a}{q-1})p^i\rangle)}{\Gamma_p(\langle a_kp^i \rangle)}
	\frac{\Gamma_p(\langle (-b_k+\frac{a}{q-1})p^i \rangle)}{\Gamma_p(\langle -b_kp^i \rangle)}.\notag
	\end{align}
\end{definition}
The function $_{n}G_{n}[\cdots]_q$ extends finite field hypergeometric functions to the $p$-adic setting, and often allows results involving finite field hypergeometric functions to be extended to a wider class of primes \cite{BS2, BS1, BSM, BS3, BS4, mccarthy2}. 
It is a non-trivial and important problem to find $p$-adic analogues of identities satisfied by the classical hypergeometric series. Only a few such results are known till date. For example, see \cite{BS3, Fuselier-McCarthy, NS}.
\par In this article, we prove two transformations for the function $_{n}G_{n}[\cdots]_q$ which can be described as $p$-adic analogues of a Euler's transformation and a
transformation of Clausen. The Euler's transformation \cite[p. 10]{slater} is given by
\begin{align}\label{Euler}
{_2F}_{1}\left(\begin{array}{cc}
a, & b \\
&c
\end{array}|x\right)=(1-x)^{c-a-b}{_2F}_{1}\left(\begin{array}{cc}
c-a, & c-b \\
&c
\end{array}|x\right).
\end{align}
Greene proved a finite field analogue of \eqref{Euler} in \cite[Theorem 4.4 (iv)]{greene}. Finding a $p$-adic analogue of \eqref{Euler} for any $a, b, c \in \mathbb{Q}\cap \mathbb{Z}_p$ seems to be a difficult problem. 
In the following theorem, we prove a transformation for the $p$-adic hypergeometric function which can be described as a $p$-adic analogue of \eqref{Euler} for certain particular values of $a, b$, and $c$. Let $\varphi$ denote the quadratic character on $\mathbb{F}_q$.
\begin{theorem}\label{MT-1}
Let $p\geq 5$ be a prime and $q=p^r$, $r\geq 1$. Then, for $x\in\mathbb{F}_q$ such that $x\neq 0, 1$, we have
\begin{align}\label{eq1-MT-1}
{_{2}G}_{2}\left[\begin{array}{cc}
           \frac{1}{3}, & \frac{2}{3} \\
           0, & \frac{1}{2}
         \end{array}|\frac{1}{x}\right]_q= \varphi(1-x)\cdot  {_2G}_{2}\left[\begin{array}{cc}
           \frac{1}{6}, & \frac{5}{6} \\
           0, & \frac{1}{2}
         \end{array}|\frac{1}{x}\right]_q.
\end{align}
Furthermore, 
\begin{align}\label{eq2-MT-1}
{_{2}G}_{2}\left[\begin{array}{cc}
           \frac{1}{3}, & \frac{2}{3} \\
           0, & \frac{1}{2}
         \end{array}|1\right]_q= \varphi(3)\cdot  {_2G}_{2}\left[\begin{array}{cc}
           \frac{1}{6}, & \frac{5}{6} \\
           0, & \frac{1}{2}
         \end{array}|1\right]_q.
\end{align}
\end{theorem}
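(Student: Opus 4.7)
The plan is to follow the strategy outlined in the abstract: express each $_2G_2$ in \eqref{eq1-MT-1} as an explicit character sum over $\mathbb{F}_q^{\times}$, and then verify the claimed identity at the level of these sums. Starting from Definition~\ref{defin1}, I would apply the Gross--Koblitz formula to rewrite each ratio of $p$-adic Gamma values as a Gauss sum in the Teichm\"uller character $\omega^{-a}$. After interchanging summations and applying orthogonality to the resulting $a$-sum, both sides of \eqref{eq1-MT-1} should collapse to single-variable character sums of the shape
\begin{align*}
\frac{1}{q-1}\sum_{y \in \mathbb{F}_q^{\times}} \chi_1(y)\,\chi_2(1-y)\,\varphi\!\left(1 - \tfrac{y}{x}\right),
\end{align*}
where $\chi_1, \chi_2$ are the multiplicative characters determined by the top parameters $(1/3,2/3)$ or $(1/6,5/6)$ respectively.

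The conceptual content of \eqref{eq1-MT-1} is that it is the $p$-adic avatar of the classical Euler transformation
\begin{align*}
{_2F_1}\!\left(\tfrac13,\tfrac23;\tfrac12;z\right)=(1-z)^{-1/2}\,{_2F_1}\!\left(\tfrac16,-\tfrac16;\tfrac12;z\right),
\end{align*}
combined with the elementary fact that $\langle -p^i/6\rangle = \langle 5p^i/6\rangle$ for every $p\geq 5$ and $i\geq 0$ (since $-1\equiv 5\pmod{6}$ and $\gcd(p,6)=1$). This congruence identifies the Teichm\"uller characters attached to $-1/6$ and $5/6$ inside Definition~\ref{defin1}, and the classical prefactor $(1-z)^{-1/2}$ translates at the character-sum level to the quadratic character $\varphi(1-x)$. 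To realise the resulting identity of character sums, I would perform a suitable fractional-linear substitution on the summation variable $y$ (in the spirit of Pfaff, e.g.\ $y\mapsto (1-y)/(1-y/x)$, or a twist thereof), and then extract the factor $\varphi(1-x)$ using multiplicativity of the quadratic character.

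The identity \eqref{eq2-MT-1} is then obtained by rerunning the same character-sum reduction with $1/x=1$ substituted from the outset, rather than passing to a limit in \eqref{eq1-MT-1} (which is forbidden since $x=1$ is excluded there). Both sums then degenerate to Jacobi-sum-type evaluations, and their ratio can be shown to equal $\varphi(3)$ using a standard Hasse--Davenport product identity together with explicit evaluations of Gauss sums attached to the sextic character.

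I expect the main technical obstacle to be the careful bookkeeping of the floor-function exponents $(-p)^{\lfloor\cdot\rfloor}$ and the alternating signs $(-1)^{an}$ appearing in Definition~\ref{defin1} when passing to Gauss sums, and in particular verifying that the fractional-linear substitution used to match the two character sums imposes no auxiliary congruence condition on $q$, so that the theorem holds for every prime $p\geq 5$ without further restriction.
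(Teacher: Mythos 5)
Your plan has the right classical identity in mind (Euler with $a=\tfrac13$, $b=\tfrac23$, $c=\tfrac12$, so $c-a=\tfrac16$, $c-b=-\tfrac16\equiv\tfrac56$), but the route you propose breaks down at two concrete points. First, rewriting each ${_2G_2}$ as a Greene-type sum $\frac{1}{q-1}\sum_y\chi_1(y)\chi_2(1-y)\varphi(1-y/x)$ with $\chi_1,\chi_2$ ``determined by the top parameters'' presupposes multiplicative characters of orders $3$ and $6$ on $\mathbb{F}_q^{\times}$, i.e.\ $q\equiv 1\pmod 6$. The theorem is asserted for every $p\geq 5$ and every $r\geq 1$, including $q\equiv 5\pmod 6$ (e.g.\ $q=5$), where no such characters exist and your reduction cannot even be set up. This is not a bookkeeping issue to ``verify'' at the end; it is the reason the paper works instead with the single sum $\frac{1}{q(q-1)}\sum_{\chi}g(\chi)g(\chi^{2})g(\overline{\chi}^{3})\chi\left(\frac{-27}{4x}\right)$, which is shown via Gross--Koblitz and the $p$-adic gamma product formulas (Lemmas \ref{lemma3_1}--\ref{lemma-3.3}) to equal \emph{both} $\frac1q+{_2G_2}\left[\begin{smallmatrix}1/3,&2/3\\0,&1/2\end{smallmatrix}\,|\,\frac1x\right]_q$ and $\frac1q+\varphi(3x)\,{_2G_2}\left[\begin{smallmatrix}1/6,&5/6\\0,&1/2\end{smallmatrix}\,|\,\frac1x\right]_q$, with no congruence condition on $q$.

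Second, and independently: the prefactor that any such direct manipulation produces is $\varphi(3x)$, not $\varphi(1-x)$ (the $3$ and the $4$ in $\chi(-27/4x)$ come out of Davenport--Hasse with $m=2$ and $g(\varphi)^2=\varphi(-1)q$). Since $\varphi(3x)=\varphi(1-x)$ only when $\varphi(3x(1-x))=1$, the identity \eqref{eq1-MT-1} as stated is \emph{not} obtainable by a change of the summation variable alone: for the half of the $x$ with $\varphi(3x(1-x))=-1$ the two prefactors disagree, and the theorem survives only because both ${_2G_2}$'s vanish there. That vanishing is Theorem \ref{MT-3}, proved by converting the sum to $\frac{1}{q-1}\sum_{y}\sum_{\chi}\overline{\chi}\bigl(27y^{2}(1-y)/4x\bigr)$ and counting roots of the cubic $27y^{2}(1-y)=4x$ via its discriminant $16\cdot 27^{3}x(1-x)$. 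Your proposal contains no step corresponding to this dichotomy, so the final ``extract the factor $\varphi(1-x)$'' cannot go through as described. (The $x=1$ case is then immediate from the $\varphi(3x)$ identity, which is why \eqref{eq2-MT-1} carries the factor $\varphi(3)$; no separate Hasse--Davenport evaluation is needed.)
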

Next, we classify the zeros of the function $\displaystyle { _{2}G}_{2}\left[\begin{array}{cc}
\frac{1}{3}, & \frac{2}{3} \\
0, & \frac{1}{2}
\end{array}|\frac{1}{x}\right]_q$ in the following theorem. 
\begin{theorem}\label{MT-3}
Let $p\geq 5$ be a prime and $q=p^r$, $r\geq 1$. Let $x\in\mathbb{F}_q$ be such that $x\neq 0, 1$. Then
	\begin{align*}
	{_{2}G}_{2}\left[\begin{array}{cc}
	\frac{1}{3}, & \frac{2}{3} \\
	0, & \frac{1}{2}
	\end{array}|\frac{1}{x}\right]_q= {_2G}_{2}\left[\begin{array}{cc}
	\frac{1}{6}, & \frac{5}{6} \\
	0, & \frac{1}{2}
	\end{array}|\frac{1}{x}\right]_q=0
	\end{align*}
	if and only if $\varphi(3x(1-x))=-1$.
\end{theorem}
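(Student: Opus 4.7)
By Theorem \ref{MT-1}, setting $F_1(x) := {_2G}_2[\tfrac{1}{3}, \tfrac{2}{3}; 0, \tfrac{1}{2}|\tfrac{1}{x}]_q$ and $F_2(x) := {_2G}_2[\tfrac{1}{6}, \tfrac{5}{6}; 0, \tfrac{1}{2}|\tfrac{1}{x}]_q$, we have $F_1(x) = \varphi(1-x) \cdot F_2(x)$. Since $x \neq 1$, the factor $\varphi(1-x)$ lies in $\{\pm 1\}$, so $F_1$ and $F_2$ vanish simultaneously. The problem thus reduces to showing that $F_2(x) = 0$ if and only if $\varphi(3x(1-x)) = -1$.

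To analyze $F_2(x)$, I would reuse the character sum representation that arises in the proof of Theorem \ref{MT-1}, where each $_2G_2$ is expressed as a sum over $\mathbb{F}_q$ before the Euler-type identity is extracted. Denote this character sum by $S(x)$, so that $F_2(x)$ is a fixed nonzero multiple of $S(x)$. The strategy is to manipulate $S(x)$ so as to produce an explicit factor of $1 + \varphi(3x(1-x))$. Concretely, a linear substitution in the summation variable that completes a square should expose a discriminant proportional to $3x(1-x)$, which by standard properties of the quadratic character then appears multiplicatively as $\varphi(3x(1-x))$. This would yield a formula of the form $S(x) = A(x) \cdot (1 + \varphi(3x(1-x)))$ for some residual sum $A(x)$.

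The main technical obstacle is verifying that $A(x) \neq 0$ whenever $\varphi(3x(1-x)) = 1$. To settle this, I would evaluate $S(x)$ explicitly in that case and reduce it to a Gauss-sum-type quantity whose nonvanishing follows from classical bounds; the hypothesis $p \geq 5$ ensures that $3$ is a unit in $\mathbb{F}_p$ and rules out small-prime degeneracies, while the exclusion $x \neq 0, 1$ guarantees $\varphi(3x(1-x))$ is well-defined in $\{\pm 1\}$. A parallel route would interpret $F_2(x)$ via the trace of Frobenius on an associated elliptic curve over $\mathbb{F}_q$ whose quadratic twist by $3x(1-x)$ accounts for the dichotomy; either approach should deliver the desired equivalence.
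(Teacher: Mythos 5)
Your opening reduction is sound in substance but circular as written: in the paper, Theorem \ref{MT-1} (the identity $F_1(x)=\varphi(1-x)\,F_2(x)$ in your notation) is itself deduced \emph{from} Theorem \ref{MT-3} in the case $\varphi(3x(1-x))=-1$, so you cannot invoke it here. The non-circular input is the pair of Propositions \ref{prop-1} and \ref{prop-2}, which give $F_1(x)=\varphi(3x)\,F_2(x)$ directly and hence the simultaneous vanishing you want; this part is easily repaired. The serious problem is that the remaining, and central, claim --- that $F_2(x)=0$ if and only if $\varphi(3x(1-x))=-1$ --- is not proved but only described as a plan, and the plan would not work as stated. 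You propose to extract a factor $1+\varphi(3x(1-x))$ by completing a square and then to show the residual factor $A(x)$ is nonzero when $\varphi(3x(1-x))=1$ by reducing to ``a Gauss-sum-type quantity whose nonvanishing follows from classical bounds.'' Classical bounds on character sums are upper bounds of the shape $|S|\le C\sqrt{q}$; an upper bound can never certify that a quantity is nonzero, so the ``only if'' direction is unsupported. No factorization of the proposed form is exhibited, and the elliptic-curve alternative is likewise asserted without identifying the curve or justifying the claimed quadratic twist.

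The paper's mechanism is different and is precisely what delivers both directions. Writing $A=\frac{1}{q(q-1)}\sum_{\chi}g(\chi)g(\chi^{2})g(\overline{\chi}^{3})\chi\bigl(\frac{-27}{4x}\bigr)$, Propositions \ref{prop-1} and \ref{prop-2} give $A-\frac{1}{q}=F_1(x)=\varphi(3x)F_2(x)$, while a separate evaluation of $A$ via Jacobi sums and the orthogonality relation \eqref{eq-2} gives $A-\frac{1}{q}=N-1$, where $N=\#\{y\in\mathbb{F}_q:\,27y^{2}(1-y)=4x\}$. Since $x\neq 0,1$, the cubic $27y^{2}(1-y)-4x$ has nonzero discriminant $16\cdot 27^{3}\,x(1-x)$, hence is separable and has exactly one root in $\mathbb{F}_q$ precisely when this discriminant is a nonsquare, i.e.\ when $\varphi(3x(1-x))=-1$; otherwise $N\in\{0,3\}$ and $F_1(x)=N-1\in\{-1,2\}$ is nonzero. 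It is this integrality of $F_1(x)$ as a shifted root count, not an analytic estimate, that yields the nonvanishing half of the equivalence. To complete your proof you would need to supply an exact evaluation of this kind; as it stands the key equivalence is missing.
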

The following transformation for classical hypergeometric series is a special case
of Clausen's famous classical identity \cite[p. 86, Eq. (4)]{bailey}.
\begin{align}\label{clausen}
{_3}F_2\left(\begin{array}{ccc}
           \frac{1}{2}, & \frac{1}{2}, &\frac{1}{2} \\
           &1,&1
         \end{array}|x\right)=(1-x)^{-1/2}~{_2}F_1\left(\begin{array}{cc}
           \frac{1}{4}, & \frac{3}{4} \\
           &1
         \end{array}|\frac{x}{x-1}\right)^2.
\end{align}
A finite field analogue of \eqref{clausen} was studied by Greene \cite[p. 94, Prop. 6.14]{greene2}. In \cite{EG}, Evans and Greene also gave a finite field analogue of the
Clausen's classical identity. The $p$-adic analogue of \eqref{clausen} is proven in \cite[Theorem 1.3]{BS3}, but over $\mathbb{F}_{p}$. In the following theorem, we prove a $p$-adic analogue of \eqref{clausen} over a finite field $\mathbb{F}_q$. 
We use a character sum identity of Ahlgren, Ono and Penniston \cite[Theorem 2.1]{ahlgren1} to deduce the following transformation over $\mathbb{F}_q$.
\begin{theorem}\label{MT-2}
Let $p$ be an odd prime and $q=p^{r}$, $r\geq 1$. Then, for $x\in\mathbb{F}_q$ such that $x\neq 0, 1$, we have
\begin{align*}
{_3G}_{3}\left[\begin{array}{ccc}
\frac{1}{2}, & \frac{1}{2}, & \frac{1}{2} \\
0, & 0, & 0
\end{array}|\frac{1}{x}
\right]_q &=\varphi(1-x)\cdot 
{_2G}_{2}\left[\begin{array}{cc}
           \frac{1}{4}, & \frac{3}{4} \\
           0, & 0
         \end{array}|\frac{x-1}{x}\right]_q^2-q\cdot \varphi(1-x).
\end{align*}
\end{theorem}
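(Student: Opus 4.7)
The plan is to express both sides of the desired identity as multiplicative character sums over $\mathbb{F}_q$ and then match them via the character sum identity of Ahlgren, Ono and Penniston \cite[Theorem 2.1]{ahlgren1}; the strategy parallels, at the level of character sums, the classical derivation of Clausen's identity \eqref{clausen}.

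The first step is to unfold the $_nG_n$ notation on both sides using Definition~\ref{defin1}. Applying the Gross--Koblitz formula to each quotient of $p$-adic gamma values and performing standard manipulations of the Teichm\"uller character (in the spirit of \cite{mccarthy2, BS3}), the function $_2G_2\left[\tfrac{1}{4},\tfrac{3}{4};0,0\mid (x-1)/x\right]_q$ should be recast as a character sum of $_2F_1$-type over $\mathbb{F}_q$, while $_3G_3\left[\tfrac{1}{2},\tfrac{1}{2},\tfrac{1}{2};0,0,0\mid 1/x\right]_q$ should become a character sum of $_3F_2$-type. Squaring the former produces a double sum over $\mathbb{F}_q^{\times}\times\mathbb{F}_q^{\times}$ that, after a linear change of summation variables tailored to the substitution $z\mapsto (x-1)/x$, realizes one side of the Ahlgren--Ono--Penniston identity.

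Invoking \cite[Theorem 2.1]{ahlgren1}, whose proof is purely character-theoretic and should extend from $\mathbb{F}_p$ to $\mathbb{F}_q$ essentially verbatim, equates this squared sum with the $_3F_2$-type character sum, and pulling the factor $\varphi(1-x)$ out through the substitution delivers the main term of the theorem. The correction $-q\,\varphi(1-x)$ arises as a boundary contribution: squaring a character sum over $\mathbb{F}_q^{\times}$ creates a diagonal piece coming from pairs on which one of the multiplicative characters in the summand is trivial, equivalent to the $a=0$ term in Definition~\ref{defin1}. Showing that this diagonal contribution simplifies precisely to $q\,\varphi(1-x)$ will be the principal technical hurdle; the remainder of the argument amounts to careful normalization bookkeeping between finite-field, $p$-adic, and Gross--Koblitz-transformed hypergeometric expressions.
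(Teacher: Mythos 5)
Your overall strategy --- realize both $G$-functions as character sums over $\mathbb{F}_q$ and feed them into the Ahlgren--Ono--Penniston identity --- is the route the paper takes, but your account of where the term $-q\,\varphi(1-x)$ comes from is mistaken, and it would derail the execution. The identity \cite[Theorem 2.1]{ahlgren1} already reads $A(\lambda,q)=\varphi(\lambda+1)\bigl(a(\lambda,q)^{2}-q\bigr)$, where $a(\lambda,q)=\sum_{x}\varphi\bigl((x-1)(x^{2}-\tfrac{1}{\lambda+1})\bigr)$ and $A(\lambda,q)=\sum_{x,y}\varphi\bigl(xy(x+1)(y+1)(x+\lambda y)\bigr)$; the $-q$ is built into the statement you are quoting, and the theorem is already stated over $\mathbb{F}_q$, so there is nothing to extend. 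The identifications one then needs are \emph{exact}, with no correction terms on either side: the paper proves $A(\lambda,q)={_3G}_{3}\bigl[\tfrac12,\tfrac12,\tfrac12;0,0,0\mid -1/\lambda\bigr]_q$ and $a(\lambda,q)=\varphi(2)\cdot{_2G}_{2}\bigl[\tfrac14,\tfrac34;0,0\mid (\lambda+1)/\lambda\bigr]_q$, then substitutes $\lambda=-x$. There is no leftover ``diagonal piece'' from squaring to compute; if you both invoke the AOP identity with its $-q$ and separately manufacture a boundary contribution of $q\,\varphi(1-x)$, you will double-count, and what you call the principal technical hurdle is in fact a non-issue.

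The genuine technical content, which your proposal compresses into ``standard manipulations'' and ``normalization bookkeeping,'' is precisely those two exact identifications. For the $_3G_3$ side the paper writes $A(\lambda,q)$ as $\frac{1}{q-1}\sum_{\chi}\chi(1/\lambda)J(\overline{\chi}\varphi,\chi)^{3}$ via orthogonality, converts Jacobi sums to Gauss sums, and applies Gross--Koblitz; this is close to what you describe. For the $_2G_2$ side, however, a direct Gross--Koblitz unfolding is not what is done: the paper routes through the binomial-coefficient sum $\frac{q^{2}\varphi(-2)}{q-1}\sum_{\chi}{\varphi\chi^{2}\choose\chi}{\varphi\chi\choose\chi}\chi\bigl(\tfrac{\lambda}{4(\lambda+1)}\bigr)$, evaluates it two ways using \cite[Lemma 2.2]{ahlgren1} and the previously established \cite[Proposition 1]{BS3}, and compares; note the constant $\varphi(2)$ that emerges and only disappears upon squaring. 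Without supplying arguments of comparable precision for these two steps, the proposal does not yet constitute a proof.
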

\section{Preliminaries}
For an odd prime $p$, let $\mathbb{F}_q$ denote the finite field with $q$ elements, where $q=p^r, r\geq 1$. Let $\widehat{\mathbb{F}_q^{\times}}$ be the group of all multiplicative
characters on $\mathbb{F}_q^{\times}$. We extend the domain of each $\chi\in \widehat{\mathbb{F}_q^{\times}}$ to $\mathbb{F}_q$ by setting $\chi(0)=0$
including the trivial character $\varepsilon$. We will make use of the following orthogonality relation for characters:
\begin{align}\label{eq-2}
\sum_{\chi\in \widehat{\mathbb{F}_q^{\times}}} \chi(x)= \left\{
\begin{array}{ll}
q-1, & \hbox{if $x=1$;} \\
0, & \hbox{otherwise.}
\end{array}
\right.
\end{align}
For multiplicative characters $A$ and $B$ on $\mathbb{F}_q$,
the binomial coefficient ${A \choose B}$ is defined by
\begin{align}\label{eq-0}
{A \choose B}:=\frac{B(-1)}{q}J(A,\overline{B})=\frac{B(-1)}{q}\sum_{x \in \mathbb{F}_q}A(x)\overline{B}(1-x),
\end{align}
where $J(A, B)$ denotes the Jacobi sum and $\overline{B}$ is the character inverse of $B$. It is easy to see that the Jacobi sum satisfies the following identity:
\begin{align}\label{eq-1}
J(A,B)=A(-1)J(A,\overline{AB}).
\end{align}
Let $\delta$ denote the function on multiplicative characters defined by
\begin{align*}\delta(A)=\left\{
\begin{array}{ll}
1, & \hbox{if $A$ is the trivial character;} \\
0, & \hbox{otherwise.}
\end{array}
\right.
\end{align*}
We recall the following properties of the binomial coefficients from \cite{greene}:
\begin{align}\label{eq-4}
{A\choose \varepsilon}={A\choose A}=\frac{-1}{q}+\frac{q-1}{q}\delta(A).
\end{align}
\par
Let $\mathbb{Z}_p$ and $\mathbb{Q}_p$ denote the ring of $p$-adic integers and the field of $p$-adic numbers, respectively.
Let $\overline{\mathbb{Q}_p}$ be the algebraic closure of $\mathbb{Q}_p$ and $\mathbb{C}_p$ the completion of $\overline{\mathbb{Q}_p}$.
Let $\mathbb{Z}_q$ be the ring of integers in the unique unramified extension of $\mathbb{Q}_p$ with residue field $\mathbb{F}_q$.
We know that $\chi\in \widehat{\mathbb{F}_q^{\times}}$ takes values in $\mu_{q-1}$, where $\mu_{q-1}$ is the group of
$(q-1)$-th roots of unity in $\mathbb{C}^{\times}$. Since $\mathbb{Z}_q^{\times}$ contains all $(q-1)$-th roots of unity,
we can consider multiplicative characters on $\mathbb{F}_q^\times$
to be maps $\chi: \mathbb{F}_q^{\times} \rightarrow \mathbb{Z}_q^{\times}$.
Let $\omega: \mathbb{F}_q^\times \rightarrow \mathbb{Z}_q^{\times}$ be the Teichm\"{u}ller character.
For $a\in\mathbb{F}_q^\times$, the value $\omega(a)$ is just the $(q-1)$-th root of unity in $\mathbb{Z}_q$ such that $\omega(a)\equiv a \pmod{p}$.
\par Next, we introduce Gauss sum and recall some of its elementary properties. For further details, see \cite{evans}. Let $\zeta_p$ be a fixed primitive $p$-th root of unity
in $\overline{\mathbb{Q}_p}$. The trace map $\text{tr}: \mathbb{F}_q \rightarrow \mathbb{F}_p$ is given by
\begin{align}
\text{tr}(\alpha)=\alpha + \alpha^p + \alpha^{p^2}+ \cdots + \alpha^{p^{r-1}}.\notag
\end{align}
%Then the additive character
%$\theta: \mathbb{F}_q \rightarrow \mathbb{Q}_p(\zeta_p)$ is defined by
%\begin{align}
%\theta(\alpha)=\zeta_p^{\text{tr}(\alpha)}.\notag
%\end{align}
For $\chi \in \widehat{\mathbb{F}_q^\times}$, the \emph{Gauss sum} is defined by
\begin{align}
g(\chi):=\sum\limits_{x\in \mathbb{F}_q}\chi(x)\zeta_p^{\text{tr}(x)}.\notag
\end{align}
\begin{lemma}\emph{(\cite[Eq. 1.12]{greene}).}\label{lemma2_1}
For $\chi \in \widehat{\mathbb{F}_q^\times}$, we have
$$g(\chi)g(\overline{\chi})=q\cdot \chi(-1)-(q-1)\delta(\chi).$$
\end{lemma}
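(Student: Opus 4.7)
The plan is to split on whether $\chi$ is trivial. If $\chi=\varepsilon$, then using the convention $\varepsilon(0)=0$ we have $g(\varepsilon)=\sum_{x\in\mathbb{F}_q^\times}\zeta_p^{\text{tr}(x)}$. Since the trace is a surjective $\mathbb{F}_p$-linear map, each value in $\mathbb{F}_p$ is attained $q/p$ times as $x$ ranges over $\mathbb{F}_q$, so $\sum_{x\in\mathbb{F}_q}\zeta_p^{\text{tr}(x)}=(q/p)\sum_{t\in\mathbb{F}_p}\zeta_p^t=0$. Removing the $x=0$ contribution gives $g(\varepsilon)=-1$, hence $g(\varepsilon)g(\overline{\varepsilon})=1$, which matches $q\cdot 1-(q-1)\cdot 1=1$ on the right-hand side.

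For nontrivial $\chi$ I would expand the product as a double sum over $\mathbb{F}_q^\times\times\mathbb{F}_q^\times$ and then perform the multiplicative substitution $y=xz$ with $z\in\mathbb{F}_q^\times$. Using $\chi(x)\overline{\chi}(xz)=\overline{\chi}(z)$, one gets
\[
g(\chi)g(\overline{\chi})=\sum_{z\in\mathbb{F}_q^\times}\overline{\chi}(z)\sum_{x\in\mathbb{F}_q^\times}\zeta_p^{\text{tr}(x(1+z))}.
\]
The inner sum equals $q-1$ when $z=-1$ (every summand is $\zeta_p^0=1$), and equals $-1$ otherwise, because for $w\in\mathbb{F}_q^\times$ the map $x\mapsto xw$ is a bijection of $\mathbb{F}_q$, so $\sum_{x\in\mathbb{F}_q}\zeta_p^{\text{tr}(xw)}=0$ and the missing $x=0$ term accounts for the $-1$. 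Isolating the $z=-1$ contribution yields $(q-1)\overline{\chi}(-1)$, while the remaining piece equals $-\sum_{z\in\mathbb{F}_q^\times,\,z\neq -1}\overline{\chi}(z)$, which by orthogonality of the nontrivial character $\overline{\chi}$ on $\mathbb{F}_q^\times$ rewrites as $\overline{\chi}(-1)$. Adding the two contributions gives $q\,\overline{\chi}(-1)=q\chi(-1)$, using $\chi(-1)^2=\chi(1)=1$ so that $\chi(-1)=\overline{\chi}(-1)$. Since $\delta(\chi)=0$ in this case, this matches the right-hand side.

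The key step is really the multiplicative change of variables $y=xz$, which decouples $\chi$ from the additive character and reduces the identity to two standard orthogonality facts, one multiplicative and one additive. There is no substantial technical obstacle; the result is foundational to the theory of Gauss sums and depends only on these two vanishing statements together with the observation that $\chi(-1)=\pm 1$ for any character. The only bookkeeping care needed is in correctly separating the singular index $z=-1$ (where $1+z$ vanishes and the additive sum becomes trivial) from the generic indices, and in remembering to extend inner sums over all of $\mathbb{F}_q$ to invoke the additive orthogonality cleanly.
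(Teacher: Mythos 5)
Your proof is correct and complete: the trivial-character case via $g(\varepsilon)=-1$, and the nontrivial case via the substitution $y=xz$ followed by additive and multiplicative orthogonality, is the standard argument for this identity. The paper itself gives no proof — it simply cites the result from Greene — so there is nothing to compare against beyond noting that your write-up supplies the details the paper omits, and it does so correctly (including the point that $\chi(-1)=\overline{\chi}(-1)$ since $\chi(-1)^2=1$, and the correct handling of the convention $\chi(0)=0$ in the trivial case).
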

The following lemma gives a relation between Jacobi and Gauss sums.
\begin{lemma}\emph{(\cite[Eq. 1.14]{greene}).}\label{lemma2_2} For $A,B\in\widehat{\mathbb{F}_q^{\times}}$ we have
\begin{align}
J(A,B)=\frac{g(A)g(B)}{g(AB)}+(q-1)B(-1)\delta(AB).\notag
\end{align}
\end{lemma}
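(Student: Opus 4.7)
The plan is to expand $g(A)\,g(B)$ as a double sum over $\mathbb{F}_q\times\mathbb{F}_q$ and reorganise it so that a copy of the Jacobi sum $J(A,B)$ appears naturally as an inner sum. I would begin with
\begin{align*}
g(A)\,g(B)=\sum_{x,y\in\mathbb{F}_q} A(x)B(y)\,\zeta_p^{\text{tr}(x+y)},
\end{align*}
reindex by setting $u=x+y$ (so $y=u-x$), and split the outer sum into the contributions from $u=0$ and $u\neq 0$.

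For $u=0$, the inner sum is $\sum_{x}A(x)B(-x)=B(-1)\sum_{x\in\mathbb{F}_q^{\times}}(AB)(x)$, which equals $(q-1)B(-1)$ when $AB=\varepsilon$ and vanishes otherwise by the orthogonality relation \eqref{eq-2}; this yields the term $(q-1)B(-1)\delta(AB)$ appearing in the statement. For $u\neq 0$, I would substitute $x=ut$, so that the inner sum becomes $A(u)B(u)\sum_{t}A(t)B(1-t)=(AB)(u)\,J(A,B)$. Summing over $u\neq 0$ against the weight $\zeta_p^{\text{tr}(u)}$, and using the convention $(AB)(0)=0$, the $u$-sum collapses to $g(AB)\,J(A,B)$. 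Assembling the two contributions produces the intermediate identity
\begin{align*}
g(A)\,g(B)=g(AB)\,J(A,B)+(q-1)B(-1)\delta(AB).
\end{align*}

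To extract the stated formula I would solve for $J(A,B)$, treating the two cases $AB\neq\varepsilon$ and $AB=\varepsilon$ uniformly. When $AB\neq\varepsilon$, Lemma \ref{lemma2_1} gives $g(AB)\,g(\overline{AB})=q\cdot(AB)(-1)\neq 0$, so $g(AB)$ is invertible and one may divide to obtain $J(A,B)=g(A)g(B)/g(AB)$, while the $\delta$-correction vanishes. When $AB=\varepsilon$, a direct computation shows $g(\varepsilon)=\sum_{x\in\mathbb{F}_q^{\times}}\zeta_p^{\text{tr}(x)}=-1$, and dividing in the intermediate identity yields $J(A,B)=g(A)g(B)/g(\varepsilon)+(q-1)B(-1)$, which agrees with the claim evaluated at $\delta(AB)=1$. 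The only point requiring care is this degenerate case: the factor $g(AB)$ must be handled by the direct evaluation $g(\varepsilon)=-1$ rather than by Lemma \ref{lemma2_1}, and it is precisely the $\delta$-correction that packages both cases into one formula.
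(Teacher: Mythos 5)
Your argument is correct and complete. The paper itself offers no proof of this lemma; it is quoted verbatim from Greene, so the only comparison to make is with the standard derivation, which is exactly what you have reconstructed: expand $g(A)g(B)$ as a double sum, reindex along $u=x+y$, identify the $u=0$ contribution as $(q-1)B(-1)\delta(AB)$ and the $u\neq 0$ contribution as $g(AB)J(A,B)$, and then invert the resulting identity $g(A)g(B)=g(AB)J(A,B)+(q-1)B(-1)\delta(AB)$. Two minor points of hygiene. First, the orthogonality you need for the $u=0$ term is $\sum_{x\in\mathbb{F}_q^{\times}}\chi(x)=0$ for nontrivial $\chi$ (and $=q-1$ for $\chi=\varepsilon$), which is the dual of the relation \eqref{eq-2} displayed in the paper (that one sums over characters at a fixed point, not over points for a fixed character); the fact is standard, but the citation points to the wrong relation. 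Second, in the degenerate case $AB=\varepsilon$, dividing the intermediate identity by $g(\varepsilon)$ literally gives $J(A,B)=g(A)g(B)/g(\varepsilon)-(q-1)B(-1)/g(\varepsilon)$, and it is only after substituting $g(\varepsilon)=-1$ into the second term that this becomes the stated $g(A)g(B)/g(\varepsilon)+(q-1)B(-1)$; the arithmetic is right, but since this sign bookkeeping is precisely what the $\delta$-correction exists to encode, it is worth writing out. (As a sanity check, $A=B=\varepsilon$ gives $g(\varepsilon)+(q-1)=q-2=J(\varepsilon,\varepsilon)$, consistent with the value the paper uses later.)
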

\begin{theorem}\emph{(\cite[Davenport-Hasse Relation]{evans}).}\label{thm2_2}
Let $m$ be a positive integer and let $q=p^r$ be a prime power such that $q\equiv 1 \pmod{m}$. For multiplicative characters
$\chi, \psi \in \widehat{\mathbb{F}_q^\times}$, we have
\begin{align}
\prod\limits_{\chi^m=\varepsilon}g(\chi \psi)=-g(\psi^m)\psi(m^{-m})\prod\limits_{\chi^m=\varepsilon}g(\chi).\notag
\end{align}
\end{theorem}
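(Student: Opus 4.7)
The plan is to recast both sides of the asserted identity as character sums over $\mathbb{F}_q$, apply the character-sum identity of Ahlgren, Ono and Penniston \cite[Theorem 2.1]{ahlgren1} to match them, and then translate the result back to the $p$-adic setting. The first step is to use the Gross-Koblitz formula to rewrite each quotient $\Gamma_p(\langle(a_k-\frac{a}{q-1})p^i\rangle)/\Gamma_p(\langle a_k p^i\rangle)$ appearing in Definition \ref{defin1} as a ratio of Gauss sums of the form $g(\chi\omega^{-a(q-1)})/g(\omega^{-a(q-1)})$. This recasts both $_3G_3[\frac{1}{2},\frac{1}{2},\frac{1}{2};0,0,0\,|\,1/x]_q$ and $_2G_2[\frac{1}{4},\frac{3}{4};0,0\,|\,(x-1)/x]_q$ as sums over $\chi\in\widehat{\mathbb{F}_q^{\times}}$ of products of Gauss sums weighted by the appropriate $\overline{\chi}(\cdot)$.

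To consolidate these Gauss sum products, I would apply the Davenport-Hasse relation (Theorem \ref{thm2_2}). For the $_3G_3$ side, Davenport-Hasse with $m=2$ allows $g(\varphi\chi)g(\chi)$ to be expressed in terms of $g(\chi^2)$; on the $_2G_2$ side, applying it with $m=4$ (or iterating with $m=2$) consolidates the pair $g(\eta\chi)g(\overline{\eta}\chi)$, where $\eta$ is a fixed character of order $4$, into Gauss sums involving $\chi^2$ and $\varphi\chi^2$. Converting the resulting Gauss sum products back to Jacobi sums via Lemma \ref{lemma2_2} and thence to finite-field binomial coefficients via \eqref{eq-0} would put both sides in a uniform binomial-coefficient-sum form to which the classical character-sum identity can be applied.

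Next I would square the character-sum expression attached to $_2G_2[\frac{1}{4},\frac{3}{4};0,0\,|\,(x-1)/x]_q$ and unfold the resulting double sum by the substitution $\chi\mapsto\chi\psi^{-1}$, so that the inner sum over one character can be evaluated via the orthogonality relation \eqref{eq-2}. The outcome is a single sum over one character whose summand is a convolution of binomial coefficients. This is precisely the object to which \cite[Theorem 2.1]{ahlgren1} applies, and it equates the convolved expression, up to the factor $\varphi(1-x)$ arising from the argument change $\frac{1}{x}\mapsto\frac{x-1}{x}$ inside $\overline{\chi^2}(\cdot)$, with the character sum attached to $_3G_3[\frac{1}{2},\frac{1}{2},\frac{1}{2};0,0,0\,|\,1/x]_q$.

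The principal obstacle will be the bookkeeping of the delta-function corrections. Lemma \ref{lemma2_2} produces an additive $(q-1)B(-1)\delta(AB)$ term every time it converts a Jacobi sum to a ratio of Gauss sums, and squaring the $_2G_2$ character sum creates numerous cross-terms of the shape delta-times-delta and delta-times-non-delta. I anticipate that most of these either cancel or collapse via \eqref{eq-4}, but a single residue of magnitude $q$ weighted by $\varphi(1-x)$ will survive, and that residue is exactly the $-q\cdot\varphi(1-x)$ correction recorded in the statement. Isolating and correctly assembling this residue, while tracking the signs introduced by the factors $\chi(-1)$ and by the sign in the Davenport-Hasse formula, is the delicate step of the argument.
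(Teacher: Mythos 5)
Your proposal does not prove the statement in question. The statement is the Davenport--Hasse relation (Theorem \ref{thm2_2}), a classical identity among Gauss sums:
\begin{align}
\prod\limits_{\chi^m=\varepsilon}g(\chi \psi)=-g(\psi^m)\,\psi(m^{-m})\prod\limits_{\chi^m=\varepsilon}g(\chi),\notag
\end{align}
which the paper quotes from Berndt--Evans--Williams without proof. What you have sketched instead is the argument for Theorem \ref{MT-2}, the $p$-adic analogue of Clausen's transformation relating ${_3G}_3$ and ${_2G}_2^2$ via the Ahlgren--Ono--Penniston character sum identity: your discussion of the Gross--Koblitz conversion of the $G$-functions, the squaring and unfolding of the ${_2G}_2$ sum, and the surviving $-q\cdot\varphi(1-x)$ residue all belong to that theorem, not to this one. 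Nothing in your write-up engages with the objects actually appearing in the Davenport--Hasse relation: the product over \emph{all} characters $\chi$ of order dividing $m$, the twist $\psi(m^{-m})$, or the leading sign $-1$.

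The problem is compounded by circularity: your second paragraph explicitly \emph{applies} the Davenport--Hasse relation (with $m=2$ and $m=4$) as a tool, so even reinterpreted charitably the argument assumes the very identity it is supposed to establish. A genuine proof of Theorem \ref{thm2_2} requires different machinery entirely --- for instance the original Davenport--Hasse method of comparing $L$-functions of a character and its lift to extension fields, or a Stickelberger/Gross--Koblitz-type computation of the $p$-adic valuations and values of both sides; alternatively, one can reduce it to a telescoping product of Jacobi sums via Lemma \ref{lemma2_2}, but that still demands a separate evaluation of the constant. You should either supply such an argument or, as the paper does, cite the result as known.
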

Now, we recall the $p$-adic gamma function. For further details, see \cite{kob}.
For a positive integer $n$,
the $p$-adic gamma function $\Gamma_p(n)$ is defined as
\begin{align}
\Gamma_p(n):=(-1)^n\prod\limits_{0<j<n,p\nmid j}j\notag
\end{align}
and one extends it to all $x\in\mathbb{Z}_p$ by setting $\Gamma_p(0):=1$ and
\begin{align}
\Gamma_p(x):=\lim_{x_n\rightarrow x}\Gamma_p(x_n)\notag
\end{align}
for $x\neq0$, where $x_n$ runs through any sequence of positive integers $p$-adically approaching $x$.
This limit exists, is independent of how $x_n$ approaches $x$,
and determines a continuous function on $\mathbb{Z}_p$ with values in $\mathbb{Z}_p^{\times}$.
Let $\pi \in \mathbb{C}_p$ be the fixed root of $x^{p-1} + p=0$ which satisfies
$\pi \equiv \zeta_p-1 \pmod{(\zeta_p-1)^2}$. Then the Gross-Koblitz formula relates Gauss sums and the $p$-adic gamma function as follows.
\begin{theorem}\emph{(\cite[Gross-Koblitz]{gross}).}\label{thm2_3} For $a\in \mathbb{Z}$ and $q=p^r, r\geq 1$, we have
\begin{align}
g(\overline{\omega}^a)=-\pi^{(p-1)\sum\limits_{i=0}^{r-1}\langle\frac{ap^i}{q-1} \rangle}\prod\limits_{i=0}^{r-1}\Gamma_p\left(\left\langle \frac{ap^i}{q-1} \right\rangle\right).\notag
\end{align}
\end{theorem}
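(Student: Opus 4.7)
The plan is to reduce both sides of the identity to character sums over $\mathbb{F}_q$ and then invoke the character-sum identity of Ahlgren, Ono, and Penniston (Theorem~2.1 of~\cite{ahlgren1}). First, I unfold the definition of $_nG_n$ on each side: for every ratio of $p$-adic gamma functions appearing in Definition~\ref{defin1}, I apply the Gross--Koblitz formula (Theorem~\ref{thm2_3}) to express it as a ratio of Gauss sums, using the Davenport--Hasse relation (Theorem~\ref{thm2_2}) to fold the product over $i=0,\dots,r-1$ into a single Gauss sum attached to a character of $\mathbb{F}_q^{\times}$. Lemma~\ref{lemma2_2} then converts these Gauss-sum quotients into Jacobi sums (modulo $\delta$-function corrections), which in turn become binomial coefficients ${A\choose B}$ via~\eqref{eq-0}.

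After this reduction, the left-hand side takes the shape
$${_3G}_3\left[\begin{array}{ccc}\frac{1}{2},&\frac{1}{2},&\frac{1}{2}\\0,&0,&0\end{array}|\frac{1}{x}\right]_q = \frac{q^2}{q-1}\sum_{\chi\in\widehat{\mathbb{F}_q^{\times}}}{\varphi\chi\choose\chi}^{3}\chi(x)+E_1(x),$$
where $E_1$ packages the correction terms from Lemmas~\ref{lemma2_1} and~\ref{lemma2_2}. A parallel computation rewrites
$${_2G}_2\left[\begin{array}{cc}\frac{1}{4},&\frac{3}{4}\\0,&0\end{array}|\frac{x-1}{x}\right]_q$$
as a single character sum of products of binomial coefficients built from a quartic character; squaring it produces a double character sum in two summation indices, with an explicit prefactor in $q$.

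The crux is the Ahlgren--Ono--Penniston identity, which collapses this double sum, after multiplication by $\varphi(1-x)$, into exactly the single sum of ${\varphi\chi\choose\chi}^{3}\chi(x)$ appearing on the left. The residual term $-q\cdot\varphi(1-x)$ arises from a boundary contribution in the collapse (the degenerate indices where the hypotheses of the identity fail) together with the discrepancy between the $\delta$-function corrections on the two sides. The main obstacle will be the bookkeeping: tracking signs, powers of $-p$, and character twists through the chain Gross--Koblitz $\to$ Gauss sum $\to$ Jacobi sum $\to$ binomial coefficient, and confirming that the exact constant $-q$ emerges after all cancellations. A related subtlety is that a character of order four need not exist in $\widehat{\mathbb{F}_q^{\times}}$ when $q\equiv 3\pmod{4}$, so the quartic-character description of the $_2G_2$ must be carried out at the level of Gauss sums and Davenport--Hasse so that the final identity holds uniformly for every odd prime $p$ and every $r\geq 1$.
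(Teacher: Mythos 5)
Your proposal does not address the statement at hand. The statement to be proved is the Gross--Koblitz formula itself, namely the identity
\begin{align*}
g(\overline{\omega}^a)=-\pi^{(p-1)\sum\limits_{i=0}^{r-1}\langle\frac{ap^i}{q-1} \rangle}\prod\limits_{i=0}^{r-1}\Gamma_p\left(\left\langle \frac{ap^i}{q-1} \right\rangle\right),
\end{align*}
which expresses a Gauss sum of a power of the Teichm\"uller character as a product of values of Morita's $p$-adic gamma function times an explicit power of $\pi$. What you have written instead is a strategy for Theorem \ref{MT-2}, the $p$-adic analogue of Clausen's transformation: you speak of unfolding the two $_nG_n$ functions, converting them to Jacobi sums and binomial coefficients, and collapsing a double character sum via the Ahlgren--Ono--Penniston identity. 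None of that has any bearing on the displayed formula above, which involves no hypergeometric function, no parameter $x$, and no transformation between two character sums.

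Worse, your argument explicitly invokes Theorem \ref{thm2_3} (``I apply the Gross--Koblitz formula'') as one of its tools, so even reinterpreted charitably it is circular with respect to the statement it is supposed to establish. The Gross--Koblitz formula is not something that can be derived by the finite-field character-sum bookkeeping you describe: it is a genuinely $p$-adic statement, proved in the cited paper of Gross and Koblitz by combining Stickelberger's theorem on the prime factorization of Gauss sums with Dwork's $p$-adic analytic theory (or, in later treatments, via the $p$-adic interpolation properties of $\Gamma_p$ and the Dwork exponential). The paper itself offers no proof and simply cites \cite{gross}; a correct answer here would either do the same or reproduce the $p$-adic argument, not manipulate Jacobi sums over $\mathbb{F}_q$.
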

The following lemmas relate certain products of values of the $p$-adic gamma function.
\begin{lemma}\emph{(\cite[Lemma 3.1]{BS1}).}\label{lemma3_1}
Let $p$ be a prime and $q=p^r, r\geq 1$. For $0\leq a\leq q-2$ and $t\geq 1$ with $p\nmid t$, we have
\begin{align}
\omega(t^{-ta})\prod\limits_{i=0}^{r-1}\Gamma_p\left(\left\langle\frac{-tp^ia}{q-1}\right\rangle\right)
\prod\limits_{h=1}^{t-1}\Gamma_p\left(\left\langle \frac{hp^i}{t}\right\rangle\right)
=\prod\limits_{i=0}^{r-1}\prod\limits_{h=0}^{t-1}\Gamma_p\left(\left\langle\frac{p^i(1+h)}{t}-\frac{p^ia}{q-1}\right\rangle \right).\notag
\end{align}
\end{lemma}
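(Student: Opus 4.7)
My plan is to reduce the stated $p$-adic gamma identity to an identity among Gauss sums via the Gross-Koblitz formula (Theorem 2.3), and then to derive that Gauss-sum identity from the Davenport-Hasse multiplication theorem (Theorem 2.2). This strategy is natural because the $\Gamma_p$-arguments on both sides of the lemma are precisely those produced by Gross-Koblitz when one expands Gauss sums of the characters $\omega^{at}$, $\chi_h:=\overline{\omega}^{h(q-1)/t}$, and $\chi_h\omega^a$.

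Assuming $t\mid q-1$, the characters $\chi_h$ for $h=0,\ldots,t-1$ form the full set $\{\chi:\chi^t=\varepsilon\}$. I would apply Theorem 2.2 with $m=t$ and $\psi=\omega^a$. Noting that $\omega^a(t^{-t})=\omega(t^{-ta})$ and that $g(\chi_0)=g(\varepsilon)=-1$, the Davenport-Hasse relation rearranges to
\begin{equation*}
\omega(t^{-ta})\, g(\omega^{at})\prod_{h=1}^{t-1}g(\chi_h)=\prod_{h=0}^{t-1}g(\chi_h\omega^a).
\end{equation*}
Substituting via Gross-Koblitz, $g(\omega^{at})=g(\overline{\omega}^{-at})$ produces $\prod_i\Gamma_p(\langle -atp^i/(q-1)\rangle)$; $g(\chi_h)$ produces $\prod_i\Gamma_p(\langle hp^i/t\rangle)$; and $g(\chi_h\omega^a)=g(\overline{\omega}^{h(q-1)/t-a})$ produces $\prod_i\Gamma_p(\langle hp^i/t-ap^i/(q-1)\rangle)$. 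The combinatorial sign $(-1)^t$ appears on both sides and cancels. Since $\langle\cdot\rangle$ sees only residues modulo $1$, running $h$ from $0$ to $t-1$ on the right matches the lemma's shifted index $1+h$ running from $1$ to $t$. The identity will then follow once the accumulated powers of $\pi$ on the two sides are shown equal.

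The remaining task is to verify
\begin{equation*}
\sum_{i=0}^{r-1}\Big[\langle -atp^i/(q-1)\rangle+\sum_{h=1}^{t-1}\langle hp^i/t\rangle\Big]=\sum_{i=0}^{r-1}\sum_{h=0}^{t-1}\langle hp^i/t-ap^i/(q-1)\rangle.
\end{equation*}
Because $p\nmid t$, multiplication by $p^i$ permutes $\{0,1,\ldots,t-1\}$ modulo $t$, so $\{\langle hp^i/t\rangle:0\le h\le t-1\}=\{h/t:0\le h\le t-1\}$. Applying the elementary identity $\sum_{h=0}^{t-1}\langle x+h/t\rangle=\langle tx\rangle+(t-1)/2$ (a consequence of $\sum_h\lfloor x+h/t\rfloor=\lfloor tx\rfloor$) with $x=-ap^i/(q-1)$ collapses the right-hand side to $\sum_i[\langle -tap^i/(q-1)\rangle+(t-1)/2]$, while the inner sum on the left is $(t-1)/2$. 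Both sides then agree. I expect the main obstacles to be the careful bookkeeping of signs and $\pi$-exponents through the Gross-Koblitz substitution, and justifying the step to $t\mid q-1$; the latter, if $t\nmid q-1$, can be handled either by lifting the identity to an extension $\mathbb{F}_{q^s}$ in which $t\mid q^s-1$ and descending, or by invoking Morita's $p$-adic multiplication formula for $\Gamma_p$ directly on the residual cases.
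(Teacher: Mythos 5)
The paper does not prove this lemma; it is quoted verbatim from \cite[Lemma 3.1]{BS1}, so there is no internal proof here to compare against. Judged on its own terms, the core of your argument is correct \emph{in the case $t\mid q-1$}: the Davenport--Hasse relation with $m=t$, $\psi=\omega^a$ rearranges exactly as you say, the Gross--Koblitz substitutions produce the right $\Gamma_p$-factors with the signs $(-1)^t$ cancelling, and your verification that the $\pi$-exponents agree --- via the fact that multiplication by $p^i$ permutes the residues modulo $t$ together with Hermite's identity $\sum_{h=0}^{t-1}\lfloor x+h/t\rfloor=\lfloor tx\rfloor$ --- is sound, as is the observation that the index shift $h\mapsto h+1$ is invisible to $\langle\cdot\rangle$.

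The genuine gap is the case $p\nmid t$ but $t\nmid q-1$, which you defer to a closing sentence. This is not a fringe technicality: the lemma is invoked in Propositions \ref{prop-1} and \ref{prop-2} with $t=3$ and $t=6$ for all $p\ge 5$, hence in particular when $q\equiv 2\pmod 3$, where $\widehat{\mathbb{F}_q^{\times}}$ has no element of order $3$ or $6$ and Davenport--Hasse with $m=t$ is unavailable. Of your two proposed fixes, the descent from $\mathbb{F}_{q^s}$ (with $t\mid q^s-1$) does not close the gap: taking $a'=a(q^s-1)/(q-1)$, each side of the $\mathbb{F}_{q^s}$-identity is indeed the $s$-th power of the corresponding side over $\mathbb{F}_q$ (the products over $h$ symmetrize each orbit of multiplication by $p$ modulo $t$, which restores periodicity in $i$ of period $r$), but this only yields $L^s=R^s$, and the admissible $s$ are precisely the multiples of $d=\mathrm{ord}_t(q)$; one is left knowing only that $L/R$ is a $d$-th root of unity in $\mathbb{Z}_p^{\times}$, i.e.\ an unresolved ambiguity in $\mu_{\gcd(d,p-1)}$ --- already a sign ambiguity for $t=3$, $q=p\equiv 2\pmod 3$. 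The second fix, the $p$-adic multiplication formula for $\Gamma_p$ applied for each $i$ with $m=t$ and $x=\langle -tap^i/(q-1)\rangle$ and then multiplied over $i$, is the right tool and treats all $t$ prime to $p$ uniformly; but it is only named, not executed, and it is exactly where the content of the lemma lies (indeed, once you have it, the Davenport--Hasse detour is unnecessary).
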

\begin{lemma}\emph{(\cite[Lemma 3.2]{BS1}).}\label{lemma3_2}
Let $p$ be a prime and $q=p^r, r\geq 1$. For $0\leq a\leq q-2$ and $t\geq 1$ with $p\nmid t$, we have
\begin{align*}
\omega(t^{ta})\prod\limits_{i=0}^{r-1}\Gamma_p\left(\left\langle\frac{tp^ia}{q-1}\right\rangle\right)
\prod\limits_{h=1}^{t-1}\Gamma_p\left(\left\langle \frac{hp^i}{t}\right\rangle\right)
=\prod\limits_{i=0}^{r-1}\prod\limits_{h=0}^{t-1}\Gamma_p\left(\left\langle\frac{p^i h}{t}+\frac{p^ia}{q-1}\right\rangle \right).\notag
\end{align*}
\end{lemma}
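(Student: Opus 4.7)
The plan is to derive Lemma~\ref{lemma3_2} from the $p$-adic analogue of the Gauss multiplication formula, obtained by combining the Gross--Koblitz formula (Theorem~\ref{thm2_3}) with the Davenport--Hasse relation (Theorem~\ref{thm2_2}). This mirrors the established strategy behind Lemma~\ref{lemma3_1}; indeed, Lemma~\ref{lemma3_2} differs from it only in the sign of $a$ in the argument of $\omega$ and in the first gamma factor, so I expect the proof to be essentially a sign-flipped version of the companion result, corresponding to using $\chi\mapsto\overline{\chi}$ in the Davenport--Hasse identity.

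First I would assume $t\mid q-1$ (the general case I handle at the end). Then the characters of order dividing $t$ on $\mathbb{F}_q^{\times}$ are precisely $\omega^{(q-1)h/t}$ for $h=0,1,\ldots,t-1$. I would apply Theorem~\ref{thm2_2} with $m=t$ and $\psi=\omega^{a}$, yielding
\begin{align*}
\prod_{h=0}^{t-1}g\!\left(\omega^{(q-1)h/t+a}\right)
=-\,g(\omega^{ta})\,\omega^{a}(t^{-t})\prod_{h=0}^{t-1}g\!\left(\omega^{(q-1)h/t}\right),
\end{align*}
and rearrange using $\omega^{a}(t^{-t})=\omega(t^{-ta})$ so that the factor $\omega(t^{ta})$ accompanies $g(\omega^{ta})$ on the correct side.

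Next, I would substitute Theorem~\ref{thm2_3} into every Gauss sum, converting each one to a product over $0\le i\le r-1$ of $\Gamma_{p}$ values at fractional parts, times a power of $\pi$. A routine bookkeeping check, using that $\sum_{i}\langle\cdot\rangle$ is additive mod~$r$ exactly when the characters balance on both sides of Davenport--Hasse, shows the powers of $\pi$ cancel. Isolating the $h=0$ term on the right (where $g(\varepsilon)=-1$) eliminates the leading sign, and the remaining product $\prod_{h=1}^{t-1}g(\omega^{(q-1)h/t})$ becomes $\prod_{i=0}^{r-1}\prod_{h=1}^{t-1}\Gamma_{p}(\langle hp^{i}/t\rangle)$, while $\prod_{h=0}^{t-1}g(\omega^{(q-1)h/t+a})$ transforms into $\prod_{i=0}^{r-1}\prod_{h=0}^{t-1}\Gamma_{p}(\langle p^{i}h/t+p^{i}a/(q-1)\rangle)$. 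Matching the two sides delivers the claimed identity.

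The main obstacle will be the case $t\nmid q-1$, where the characters $\omega^{(q-1)h/t}$ do not exist on $\mathbb{F}_{q}^{\times}$. My plan is to pass to an unramified extension $\mathbb{F}_{q^{s}}$ with $t\mid q^{s}-1$, apply the previous argument over $\mathbb{F}_{q^{s}}$, and then descend: both sides of the identity as stated are intrinsic to the rational numbers $a/(q-1)$ and $h/t$ in $\mathbb{Z}_{p}$, so expanding $\langle\cdot\rangle$ at all $p$-power Frobenius twists over $\mathbb{F}_{q^{s}}$ merely regroups the same $p$-adic gamma values that appear over $\mathbb{F}_{q}$ with multiplicity $s$, and taking $s$-th roots (equivalently, extracting one orbit under $x\mapsto xp^{r}$) recovers the identity over $\mathbb{F}_{q}$.
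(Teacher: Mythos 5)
The paper does not actually prove this lemma; it is quoted verbatim from \cite[Lemma 3.2]{BS1}, so your proposal can only be measured against the standard argument, which is indeed the Gross--Koblitz plus Davenport--Hasse route you describe. In the case $t\mid q-1$ your outline is essentially correct, up to one sign: with $\psi=\omega^{a}$ the Gross--Koblitz formula (stated for $g(\overline{\omega}^{\,b})$) converts $g(\omega^{(q-1)h/t+a})$ into $\prod_{i}\Gamma_p(\langle hp^i/t-ap^i/(q-1)\rangle)$ after reindexing in $h$, i.e.\ it yields the companion Lemma~\ref{lemma3_1}; for the present lemma you must take $\psi=\overline{\omega}^{\,a}$, as your parenthetical remark about $\chi\mapsto\overline{\chi}$ suggests. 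The cancellation of the powers of $\pi$ is automatic, since every remaining factor on both sides is a $p$-adic unit.

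The genuine gap is in the descent when $t\nmid q-1$. Your regrouping claim is in fact salvageable: although the individual factors $\Gamma_p(\langle hp^i/t+ap^i/(q-1)\rangle)$ are not $r$-periodic in $i$ when $p^{r}\not\equiv 1\pmod{t}$, the inner product over $h$ is, because $h\mapsto hp^{i}\bmod t$ permutes $\{0,\dots,t-1\}$ without changing the product; likewise $\omega_{q^{s}}(t^{ta'})=\omega(t^{ta})^{s}$ for $a'=a(q^{s}-1)/(q-1)$. So the identity over $\mathbb{F}_{q^{s}}$ reads $X^{s}=Y^{s}$, where $X$ and $Y$ denote the two sides of the desired identity over $\mathbb{F}_{q}$. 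But ``taking $s$-th roots'' is not a legitimate step: it only gives $X=\zeta Y$ for some $s$-th root of unity $\zeta\in\mathbb{Z}_q^{\times}$, and $\mathbb{Z}_q$ contains nontrivial $s$-th roots of unity whenever $\gcd(s,q-1)>1$, which does occur (e.g.\ $t=3$, $r=1$, $p\equiv 2\pmod{3}$ forces $2\mid s$ while $\gcd(2,p-1)=2$). You must supply an argument that $\zeta=1$, for instance by verifying the congruence $X\equiv Y\pmod{p}$ (a root of unity of order prime to $p$ whose image in the residue field is $1$ equals $1$), or by deriving the identity instead from the $p$-adic Gauss multiplication formula for $\Gamma_p$, which holds for every argument in $\mathbb{Z}_p$ and every $t$ prime to $p$ with no divisibility hypothesis. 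As written, the proof is incomplete precisely in the cases (for example $t=6$ with $p\equiv 5\pmod{6}$ and $r$ odd) that the paper's main theorems require.
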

We prove the following lemmas which will be used to prove our main results.
\begin{lemma}\label{lemma-3.2}
Let $p$ be an odd prime and $q=p^r, r\geq 1$. Then, for $0\leq a\leq q-2$ such that $a\neq \frac{q-1}{2}$ and $0\leq i\leq r-1$, we have
\begin{align}\label{eq-5}
&-2\left\lfloor\frac{2ap^i}{q-1}\right\rfloor -\left\lfloor\frac{-6ap^i}{q-1}\right\rfloor+\left\lfloor\frac{ap^i}{q-1}\right\rfloor +\left\lfloor\frac{-3ap^i}{q-1}\right\rfloor\nonumber\\
&=-\left\lfloor\left\langle\frac{p^i}{6}\right\rangle-\frac{ap^i}{q-1}\right\rfloor-\left\lfloor\left\langle\frac{5p^i}{6}\right\rangle-\frac{ap^i}{q-1}\right\rfloor-\left\lfloor\left\langle\frac{p^i}{2}\right\rangle+\frac{ap^i}{q-1}\right\rfloor-\left\lfloor\frac{ap^i}{q-1}\right\rfloor.
\end{align}
\end{lemma}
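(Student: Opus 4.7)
The plan is to normalize \eqref{eq-5} by setting $\alpha := \frac{ap^i}{q-1}$ and writing $\alpha = N + f$ with $N := \lfloor \alpha \rfloor \in \mathbb{Z}$ and $f := \langle \alpha \rangle \in [0,1)$. First I would check that the hypothesis $a \neq (q-1)/2$ (together with $0 \leq a \leq q-2$) is exactly equivalent to $f \neq 1/2$: since any common divisor of $p$ and $p^r - 1$ divides $1$, we have $\gcd(p^i, q-1) = 1$, so $f = 1/2$ would force $(q-1) \mid 2a$ with odd quotient, which under the range constraint leaves only $a = (q-1)/2$. Since the lemma is applied in the proof of Theorem~\ref{MT-1} (which assumes $p \geq 5$), I would likewise use $\gcd(p^i, 6) = 1$ to conclude $p^i \equiv \pm 1 \pmod 6$, whence $\{\langle p^i/6 \rangle, \langle 5p^i/6 \rangle\} = \{1/6, 5/6\}$ and $\langle p^i/2 \rangle = 1/2$, uniformly in $i$.

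Applying $\lfloor n + y \rfloor = n + \lfloor y \rfloor$ (for $n \in \mathbb{Z}$) to each floor in \eqref{eq-5} and collecting, the $N$-coefficients cancel on both sides (on the LHS, $-2{\cdot}2 - 1{\cdot}(-6) + 1{\cdot}1 + 1{\cdot}(-3) = 0$; on the RHS, $-(-1) - (-1) - 1 - 1 = 0$). The lemma therefore reduces to the purely fractional-part identity
\begin{equation*}
-2\lfloor 2f \rfloor - \lfloor -6f \rfloor + \lfloor -3f \rfloor \;=\; -\lfloor \tfrac{1}{6} - f \rfloor - \lfloor \tfrac{5}{6} - f \rfloor - \lfloor \tfrac{1}{2} + f \rfloor
\end{equation*}
for every $f \in [0,1) \setminus \{1/2\}$.

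The reduced identity is piecewise constant, and both sides can jump only at the breakpoints $\{0,\,1/6,\,1/3,\,1/2,\,2/3,\,5/6\}$, i.e.\ at the values of $f$ where one of $2f$, $-3f$, $-6f$, $1/6 - f$, $5/6 - f$, or $1/2 + f$ crosses an integer. I would finish by tabulating the six floor values on each of the six subintervals cut out by these breakpoints and verifying the equality on each; the exclusion $f = 1/2$ is sharp, as a direct evaluation there gives LHS $= -1$ while RHS $= 0$, exactly matching the forbidden case $a = (q-1)/2$. I expect no genuine conceptual obstacle, just the routine bookkeeping of the signs of $\lfloor -3f \rfloor$ and $\lfloor -6f \rfloor$, which run through several distinct negative integers as $f$ sweeps across $[0,1)$.
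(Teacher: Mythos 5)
Your proposal is correct and is essentially the paper's own argument: after stripping off the integer parts, both proofs reduce to a six-way case analysis according to where $\langle ap^i/(q-1)\rangle$ sits among the sextants of $[0,1)$ (the paper encodes this by writing $\left\lfloor\frac{-6ap^i}{q-1}\right\rfloor=6k+s$ with $0\le s\le 5$ and checking each $s$, which is the same bookkeeping). Your version is marginally tidier in that the symmetry of the right-hand side lets you treat $p^i\equiv 1$ and $p^i\equiv 5\pmod 6$ simultaneously, and you make explicit the hypothesis $p\ge 5$ (i.e.\ $p\nmid 6$) that the paper's statement leaves implicit but whose proof also relies on it.
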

\begin{proof}
If $a=0$, then it readily follows that \eqref{eq-5} is true. For $a\neq 0$, we write $\left\lfloor\frac{-6ap^i}{q-1}\right\rfloor=6k+s$, where $k, s \in \mathbb{Z}$ satisfying $0\leq s\leq 5$.
Then, we have
\begin{align}\label{eq-6}
6k+s\leq\frac{-6ap^i}{q-1}< 6k+s+1.
\end{align}
If $p^i\equiv 1\pmod{6}$, then \eqref{eq-6} yields
\begin{align*}
\left\lfloor\frac{ap^i}{q-1}\right\rfloor = -k-1,
\end{align*}
\begin{align}\label{eq-7}
\left\lfloor\frac{2ap^i}{q-1}\right\rfloor=\left\{
                                              \begin{array}{ll}
                                                -2k-1, & \hbox{if $s=0,1,2$;} \\
                                                -2k-2, & \hbox{if $s=3,4,5$,}
                                              \end{array}
                                            \right.
\end{align}
\begin{align}\label{eq-8}
\left\lfloor\frac{-3ap^i}{q-1}\right\rfloor=\left\{
                                              \begin{array}{ll}
                                                3k, & \hbox{if $s=0,1$;} \\
                                                3k+1, & \hbox{if $s=2,3$;}\\
                                                 3k+2, & \hbox{if $s=4,5$,}\\
                                              \end{array}
                                            \right.
\end{align}
\begin{align*}
\left\lfloor\left\langle\frac{p^i}{6}\right\rangle-\frac{ap^i}{q-1}\right\rfloor=\left\{
                                                                         \begin{array}{ll}
                                                                           k, & \hbox{if $s=0,1,2,3,4$;} \\
                                                                           k+1, & \hbox{if $s=5$,}
                                                                         \end{array}
                                                                       \right. 
\end{align*}
\begin{align}
\left\lfloor\left\langle\frac{5p^i}{6}\right\rangle-\frac{ap^i}{q-1}\right\rfloor=\left\{
                                                                         \begin{array}{ll}
                                                                           k, & \hbox{if $s=0$;} \\
                                                                           k+1, & \hbox{if $s=1,2,3,4,5$,}
                                                                         \end{array}
                                                                       \right.\notag
\end{align}
and
\begin{align}\label{eq-9}
\left\lfloor\left\langle\frac{p^i}{2}\right\rangle+\frac{ap^i}{q-1}\right\rfloor=\left\{
                                                                         \begin{array}{ll}
                                                                           -k, & \hbox{if $s=0,1,2$;} \\
                                                                           -k-1, & \hbox{if $s=3,4,5$.}
                                                                         \end{array}
                                                                       \right.
\end{align}
We note that in \eqref{eq-7} and \eqref{eq-9}, we need to assume that $a\neq \frac{q-1}{2}$. Putting the above values for different values of $s$ we readily obtain \eqref{eq-5}. Proof of \eqref{eq-5} goes along similar lines when $p^i\equiv 5\pmod{6}$. 
\end{proof}
\begin{lemma}\label{lemma-3.3}
Let $p$ be an odd prime and $q=p^r, r\geq 1$. Then, for $0< a\leq q-2$ and $0\leq i\leq r-1$, we have
\begin{align}\label{eq-10}
&-\left\lfloor\frac{2ap^i}{q-1}\right\rfloor-\left\lfloor\frac{-3ap^i}{q-1}\right\rfloor\nonumber\\ 
&=1-\left\lfloor\left\langle\frac{p^i}{3}\right\rangle-\frac{ap^i}{q-1}\right\rfloor-\left\lfloor\left\langle\frac{2p^i}{3}\right\rangle-\frac{ap^i}{q-1}\right\rfloor-\left\lfloor\left\langle\frac{p^i}{2}\right\rangle+\frac{ap^i}{q-1}\right\rfloor.
\end{align}
\end{lemma}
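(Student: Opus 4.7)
The plan is to mimic the case-analysis strategy used in the proof of Lemma \ref{lemma-3.2}. Since $p$ is odd, $p^i$ is congruent to either $1$ or $5\pmod{6}$, and in each residue class the fractional parts $\langle p^i/3\rangle$, $\langle 2p^i/3\rangle$, and $\langle p^i/2\rangle$ are determined explicitly. I would write
$$\left\lfloor\frac{-3ap^i}{q-1}\right\rfloor = 3k + s, \qquad k\in\mathbb{Z},\ s\in\{0,1,2\},$$
so that
$$3k+s \leq \frac{-3ap^i}{q-1} < 3k+s+1.$$
This inequality pins $ap^i/(q-1)$ inside a subinterval of length $1/3$, which in turn forces the values of $\lfloor 2ap^i/(q-1)\rfloor$, $\lfloor\langle p^i/3\rangle - ap^i/(q-1)\rfloor$, $\lfloor\langle 2p^i/3\rangle - ap^i/(q-1)\rfloor$, and $\lfloor\langle p^i/2\rangle + ap^i/(q-1)\rfloor$ up to controllable boundary contributions.

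For each pair (residue of $p^i\pmod{6}$, value $s\in\{0,1,2\}$) I would tabulate these floor values, obtaining six cases analogous to the tables \eqref{eq-7}--\eqref{eq-9} that appear in the proof of Lemma \ref{lemma-3.2}. The identity \eqref{eq-10} is then verified by direct substitution in each case. Unlike Lemma \ref{lemma-3.2}, no assumption of the form $a\neq(q-1)/2$ is required here: the multiplier in front of $a$ on the left is $2$ and $3$ rather than $2$ and $6$, so the problematic simultaneous vanishing of $-6ap^i/(q-1)-\lfloor\cdot\rfloor$ does not occur. The constant $+1$ on the right-hand side of \eqref{eq-10}, absent from \eqref{eq-5}, arises precisely because $\langle p^i/2\rangle = 1/2$ combines with the $2ap^i/(q-1)$ term to shift the parity count by one.

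The main obstacle is the careful bookkeeping of boundary effects when $3\mid ap^i$ or $2\mid ap^i$: at such values the quantities $-3ap^i/(q-1)$ or $2ap^i/(q-1)$ become integers and the associated floor values can jump by $1$. The hypothesis $a>0$ rules out the trivial degenerate case, and within the $s=0$ sub-case one must separate the endpoint $ap^i/(q-1) = -k$ from the generic situation $-k-1/3 < ap^i/(q-1) < -k$; both subcases are handled by the same algebra, so once the six main cases (together with this endpoint) are written out, \eqref{eq-10} follows.
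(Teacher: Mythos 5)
Your overall strategy (case analysis on the residue of $p^i$ modulo $6$ together with a decomposition of a floor into $k$ and $s$) matches the paper's, but your choice of decomposition is too coarse and this leaves a real gap. Writing $\left\lfloor\frac{-3ap^i}{q-1}\right\rfloor=3k+s$ with $s\in\{0,1,2\}$ only confines $\frac{ap^i}{q-1}$ to an interval of length $\frac{1}{3}$. That is enough to determine $\left\lfloor\left\langle\frac{p^i}{3}\right\rangle-\frac{ap^i}{q-1}\right\rfloor$ and $\left\lfloor\left\langle\frac{2p^i}{3}\right\rangle-\frac{ap^i}{q-1}\right\rfloor$, but it does \emph{not} determine $\left\lfloor\frac{2ap^i}{q-1}\right\rfloor$: in the case $s=1$ the quantity $\frac{2ap^i}{q-1}$ ranges over an interval of length $\frac{2}{3}$ whose interior contains the integer $-2k-1$, so the floor takes two different values on subintervals of positive length. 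The same happens for $\left\lfloor\left\langle\frac{p^i}{2}\right\rangle+\frac{ap^i}{q-1}\right\rfloor$ when $s=1$. These are not the "boundary contributions" at isolated points with $2\mid ap^i$ or $3\mid ap^i$ that you describe; they are genuine bifurcations inside a whole case. The identity \eqref{eq-10} still holds because the two ambiguities are perfectly correlated: $\left\lfloor\frac{2ap^i}{q-1}\right\rfloor$ drops by one exactly when $\left\lfloor\frac{1}{2}+\frac{ap^i}{q-1}\right\rfloor$ does, namely when $\frac{ap^i}{q-1}<-k-\frac{1}{2}$, and since one of these floors sits on each side of \eqref{eq-10} with a minus sign, the two shifts cancel. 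Your write-up neither notices this correlation nor proves it. The clean repair is to refine the decomposition to $\left\lfloor\frac{-6ap^i}{q-1}\right\rfloor=6k+s$ with $0\le s\le 5$, which pins $\frac{ap^i}{q-1}$ in an interval of length $\frac{1}{6}$ and makes every floor single-valued per case; this is exactly what the paper does, reusing the tables \eqref{eq-7}--\eqref{eq-9} from the proof of Lemma \ref{lemma-3.2}.

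Your claim that, unlike Lemma \ref{lemma-3.2}, no hypothesis of the form $a\neq\frac{q-1}{2}$ is needed is also the opposite of what the paper does. The value $a=\frac{q-1}{2}$ is precisely where $\frac{2ap^i}{q-1}=p^i$ is an integer and $\frac{ap^i}{q-1}$ is a half-integer, i.e., the boundary of the correlated bifurcation above, and the tables \eqref{eq-7} and \eqref{eq-9} are only valid for $a\neq\frac{q-1}{2}$. The paper therefore verifies \eqref{eq-10} at $a=\frac{q-1}{2}$ by direct computation first and runs the case analysis only for the remaining $a$; you would need to do the same, or else track the open/closed endpoints with much more care than your sketch provides.
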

\begin{proof}
If $a= \frac{q-1}{2}$, then it readily follows that \eqref{eq-10} is true. For $a\neq \frac{q-1}{2}$, we write	
$\left\lfloor\frac{-6ap^i}{q-1}\right\rfloor=6k+s$, where $k, s \in \mathbb{Z}$ satisfying $0\leq s\leq 5$.
Then, we have
\begin{align}\label{new-eq-6}
	6k+s\leq\frac{-6ap^i}{q-1}< 6k+s+1.
\end{align}
If $p^i\equiv 1\pmod{6}$, then \eqref{new-eq-6} yields
\begin{align*}
\left\lfloor\left\langle\frac{p^i}{3}\right\rangle-\frac{ap^i}{q-1}\right\rfloor=\left\{
                                                                         \begin{array}{ll}
                                                                           k, & \hbox{if $s=0,1,2,3$;} \\
                                                                           k+1, & \hbox{if $s=4,5$,}
                                                                         \end{array}
                                                                       \right. 
\end{align*}
and
\begin{align*}
\left\lfloor\left\langle\frac{2p^i}{3}\right\rangle-\frac{ap^i}{q-1}\right\rfloor=\left\{
                                                                         \begin{array}{ll}
                                                                           k, & \hbox{if $s=0,1$;} \\
                                                                           k+1, & \hbox{if $s=2,3,4,5$.}
                                                                         \end{array}
                                                                       \right. 
\end{align*}
Putting the above values and using \eqref{eq-7}, \eqref{eq-8}, and \eqref{eq-9} for different values of $s$ we readily obtain \eqref{eq-10}. The proof of \eqref{eq-10} goes along similar lines when $p^i\equiv 5\pmod{6}$.
\end{proof}
\begin{lemma}
Let $p$ be an odd prime and $q=p^{r}, r\geq 1$. For $0<j\leq q-2$, we have
\begin{align}\label{eq-12}
\prod_{i=0}^{r-1} \Gamma_{p}(\langle(1-\frac{j}{q-1})p^{i}\rangle)\Gamma_{p}(\langle\frac{jp^{i}}{q-1}\rangle) = (-1)^r \overline{\omega}^{j}(-1).
\end{align}
For $0\leq j\leq q-2$ such that $j\neq \frac{q-1}{2}$, we have
\begin{align}\label{eq-13}
\prod_{i=0}^{r-1} \frac{\Gamma_{p}(\langle(\frac{1}{2}-\frac{j}{q-1})p^{i}\rangle)\Gamma_{p}(\langle(\frac{1}{2}+\frac{j}{q-1})p^{i}\rangle)}{\Gamma_{p}\langle\frac{p^{i}}{2}\rangle\Gamma_{p}\langle\frac{p^{i}}{2}\rangle} = \overline{\omega}^{j}(-1).
\end{align}
\end{lemma}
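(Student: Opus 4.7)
The strategy for both identities is to recognize each left-hand side as the quotient of products that arise naturally from the Gross--Koblitz formula (Theorem \ref{thm2_3}) applied to pairs of mutually inverse multiplicative characters, and then to evaluate the corresponding product of Gauss sums in closed form via Lemma \ref{lemma2_1}. The essential ingredient beyond these two tools is the reflection identity $\langle x \rangle + \langle -x \rangle = 1$ for $x \notin \mathbb{Z}$, together with $\pi^{p-1} = -p$.

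For \eqref{eq-12}, I would take $\chi = \overline{\omega}^{\,j}$. Since $0 < j \leq q-2$, the character $\chi$ is non-trivial, so Lemma \ref{lemma2_1} gives $g(\chi)\,g(\overline{\chi}) = q\,\overline{\omega}^{\,j}(-1)$. Writing $\overline{\chi} = \overline{\omega}^{\,q-1-j}$ and applying Theorem \ref{thm2_3} to each Gauss sum, I would combine the two $\pi$-exponents into $(p-1)\sum_{i=0}^{r-1}\bigl(\langle jp^i/(q-1)\rangle + \langle (q-1-j)p^i/(q-1)\rangle\bigr)$. Since $\gcd(p^i,q-1)=1$ and $j \not\equiv 0 \pmod{q-1}$, each fractional part is nonzero, so each summand equals $1$ by the reflection identity, and the total contribution of $\pi$ becomes $\pi^{(p-1)r} = (-p)^r = (-1)^r q$. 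The two leading minus signs from Gross--Koblitz multiply to $1$, and using $\langle (q-1-j)p^i/(q-1)\rangle = \langle (1 - j/(q-1))p^i\rangle$, rearrangement gives \eqref{eq-12}.

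For \eqref{eq-13}, I would apply the same mechanism twice and divide. Choosing $\chi = \omega^{m}$ with $m \equiv (q-1)/2 + j \pmod{q-1}$ (which is non-trivial precisely because $j \neq (q-1)/2$), the characters $\chi$ and $\overline{\chi}$ produce via Theorem \ref{thm2_3} exactly the factors $\Gamma_p(\langle (1/2 + j/(q-1))p^i\rangle)$ and $\Gamma_p(\langle (1/2 - j/(q-1))p^i\rangle)$ needed in the numerator. The same collapse of fractional parts yields numerator product $(-1)^r\chi(-1) = (-1)^{r + (q-1)/2}\,\overline{\omega}^{\,j}(-1)$. Applied separately to the quadratic character $\varphi = \omega^{(q-1)/2}$, which is non-trivial for odd $p$, the identical argument evaluates the denominator as $\prod_{i=0}^{r-1} \Gamma_p(\langle p^i/2\rangle)^2 = (-1)^{r + (q-1)/2}$. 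Dividing produces $\overline{\omega}^{\,j}(-1)$ as claimed.

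I expect the main obstacle to be not conceptual but purely sign bookkeeping: carefully tracking the $(-1)^r$ arising from $\pi^{(p-1)r} = (-p)^r$, the parity factor $(-1)^{(q-1)/2}$ coming from both $\chi(-1)$ and $\varphi(-1)$, and the leading minus signs inherent to the Gross--Koblitz formula. These cancel cleanly in the ratio defining \eqref{eq-13}, which is precisely why the same sign $(-1)^{r + (q-1)/2}$ appears in both numerator and denominator.
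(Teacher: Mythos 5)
Your proposal is correct and takes exactly the route the paper intends: the paper's own proof is the one-line remark that the lemma ``readily follows using Gross--Koblitz formula and Lemma \ref{lemma2_1}'', and your argument is precisely that computation carried out in full, with the key points (non-triviality of $\overline{\omega}^{j}$ and of $\overline{\omega}^{j+(q-1)/2}$, the reflection $\langle x\rangle+\langle -x\rangle=1$ collapsing the $\pi$-exponent to $(p-1)r$, and the cancellation of $(-1)^{r+(q-1)/2}$ in the quotient for \eqref{eq-13}) all handled correctly. No gaps.
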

\begin{proof}
	The proof readily follows using Gross-Koblitz formula and Lemma \ref{lemma2_1}.
\end{proof}
 \section{Proofs of the main results}
We first prove two propositions which enable us to express certain character sums in terms of the $p$-adic hypergeometric functions.
\begin{proposition}\label{prop-1}
Let $p\geq 5$ be a prime and $q=p^r, r\geq 1$. For $x\in\mathbb{F}_q^{\times}$, we have
\begin{align}
\frac{1}{q(q-1)}\sum_{\chi\in\widehat{\mathbb{F}_q^{\times}}}
{g(\chi)}{g(\chi^{2})}{g(\overline{\chi}^{3})}\chi\left(\frac{-27}{4x}\right)=\frac{1}{q}+{_{2}G}_{2}\left[\begin{array}{cc}
           \frac{1}{3}, & \frac{2}{3} \\
           0, & \frac{1}{2}
         \end{array}|\frac{1}{x}\right]_q.\notag
\end{align}
\end{proposition}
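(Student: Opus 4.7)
The strategy is to invoke the Gross--Koblitz formula (Theorem \ref{thm2_3}) to recast each Gauss sum as a $p$-adic gamma product, then use Lemmas \ref{lemma3_1} and \ref{lemma3_2} (together with identity \eqref{eq-12} and Lemma \ref{lemma-3.3}) to rearrange the summand so that it coincides, term by term, with the definition of ${_{2}G}_{2}[\frac{1}{3},\frac{2}{3};0,\frac{1}{2}|\frac{1}{x}]_q$. I parameterize the sum by writing $\chi=\overline{\omega}^{a}$, $0\le a\le q-2$, and separate the $a=0$ term: since $g(\varepsilon)^{3}=-1$ and $\varepsilon(-27/(4x))=1$, it contributes $-1/[q(q-1)]$. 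The additive constant $1/q$ on the right-hand side will emerge once I reconcile this with the $a=0$ summand of ${_{2}G}_{2}$, which a direct computation gives as $-1/(q-1)$.

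For $a\ne 0$, Gross--Koblitz yields $g(\overline{\omega}^{a})g(\overline{\omega}^{2a})g(\overline{\omega}^{-3a})=-\pi^{(p-1)s}\prod_{i=0}^{r-1}\Gamma_{p}(\langle ap^{i}/(q-1)\rangle)\Gamma_{p}(\langle 2ap^{i}/(q-1)\rangle)\Gamma_{p}(\langle -3ap^{i}/(q-1)\rangle)$, with $s=\sum_{i}[\langle ap^{i}/(q-1)\rangle+\langle 2ap^{i}/(q-1)\rangle+\langle -3ap^{i}/(q-1)\rangle]$. Because $a+2a-3a=0$, the identity $\langle x\rangle=x-\lfloor x\rfloor$ collapses $s$ to $-E$ where $E=\sum_{i}[\lfloor ap^{i}/(q-1)\rfloor+\lfloor 2ap^{i}/(q-1)\rfloor+\lfloor -3ap^{i}/(q-1)\rfloor]$, and $\pi^{p-1}=-p$ turns the $\pi$-power into $(-p)^{-E}$. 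I then apply Lemma \ref{lemma3_1} with $t=3$ to $\prod_{i}\Gamma_{p}(\langle -3ap^{i}/(q-1)\rangle)$ and Lemma \ref{lemma3_2} with $t=2$ to $\prod_{i}\Gamma_{p}(\langle 2ap^{i}/(q-1)\rangle)$, which produce exactly the $\Gamma_{p}$ quotients with shifts $\langle p^{i}/3\rangle$, $\langle 2p^{i}/3\rangle$, $\langle p^{i}/2\rangle$ appearing in the ${_{2}G}_{2}$ summand, and introduce the characters $\omega(3^{3a})$ and $\omega(2^{-2a})$. The $h=2$ term of Lemma \ref{lemma3_1} leaves a stray $\prod_{i}\Gamma_{p}(\langle -ap^{i}/(q-1)\rangle)$, which multiplies the surviving $\prod_{i}\Gamma_{p}(\langle ap^{i}/(q-1)\rangle)$ and collapses via \eqref{eq-12} to $(-1)^{r}\overline{\omega}^{a}(-1)$.

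Combined with $\chi(-27/(4x))=\overline{\omega}^{a}(-1)\overline{\omega}^{3a}(3)\omega^{2a}(2)\overline{\omega}^{a}(1/x)$, the character factors cancel completely: $\omega(3^{3a}2^{-2a})$ annihilates $\overline{\omega}^{3a}(3)\omega^{2a}(2)$ and $\overline{\omega}^{a}(-1)^{2}=1$, so what survives is $(-1)^{r}\overline{\omega}^{a}(1/x)$ times the ${_{2}G}_{2}$-style $\Gamma_{p}$ quotient. Finally, Lemma \ref{lemma-3.3} gives $-E=r-\sum_{i}f_{a}(i)$, where $f_{a}(i)$ is the exponent of $-p$ in the $a$-th summand of the ${_{2}G}_{2}$ definition; hence $(-p)^{-E}=(-p)^{r}\prod_{i}(-p)^{-f_{a}(i)}=(-1)^{r}q\prod_{i}(-p)^{-f_{a}(i)}$. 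Combining the three $-1$'s from Gross--Koblitz, this $(-1)^{r}q$, and the $(-1)^{r}$ from \eqref{eq-12}, the entire prefactor reduces to $-q$, so that $\frac{1}{q(q-1)}g(\chi)g(\chi^{2})g(\overline{\chi}^{3})\chi(-27/(4x))$ equals the $a$-th summand of ${_{2}G}_{2}[\frac{1}{3},\frac{2}{3};0,\frac{1}{2}|\frac{1}{x}]_q$. Summing over $a\ne 0$ yields ${_{2}G}_{2}+1/(q-1)$, and adding the $a=0$ contribution $-1/[q(q-1)]$ gives ${_{2}G}_{2}+1/q$, as required. The principal obstacle is the $+1$ per $i$ in Lemma \ref{lemma-3.3}: its summed contribution $(-p)^{r}=(-1)^{r}q$ is what cancels the prefactor $1/q$, and the mismatched $a=0$ terms on the two sides are exactly what supply the additional $1/q$ on the right.
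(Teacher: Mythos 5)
Your proposal is correct and follows essentially the same route as the paper's own proof: Gross--Koblitz, isolating the $a=0$ term, Lemma \ref{lemma3_1} with $t=3$ and Lemma \ref{lemma3_2} with $t=2$, the collapse of $\prod_i\Gamma_p(\langle ap^i/(q-1)\rangle)\Gamma_p(\langle -ap^i/(q-1)\rangle)$ via \eqref{eq-12}, and Lemma \ref{lemma-3.3} for the floor bookkeeping, with the same final reconciliation $\tfrac{1}{q-1}-\tfrac{1}{q(q-1)}=\tfrac{1}{q}$. The only caveat is notational: for your identity $-E=r-\sum_i f_a(i)$ and the factor $(-p)^{-f_a(i)}$ to be consistent, $f_a(i)$ must denote the sum of the floor terms (so that the exponent appearing in Definition \ref{defin1} is $-f_a(i)$), not the exponent itself as you phrase it.
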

\begin{proof}
Let $T$ be a generator of the cyclic group $\widehat{\mathbb{F}_q^{\times}}$.  Then we have
\begin{align}
A&:= \frac{1}{q(q-1)}\sum_{\chi\in\widehat{\mathbb{F}_q^{\times}}}
{g(\chi)}{g(\chi^{2})}{g(\overline{\chi}^{3})}\chi\left(\frac{-27}{4x}\right)\nonumber\\
&=\frac{1}{q(q-1)} \sum_{a=0}^{q-2}
{g(T^{a})}{g(T^{2a})}{g(T^{-3a})}T^{a}\left(\frac{-27}{4x}\right).\notag
\end{align}
Now, taking $T=\overline{\omega}$ and then applying Gross-Koblitz formula we deduce that
\begin{align}
A&= \frac{1}{q(q-1)}\sum_{a=0}^{q-2} 
{g(\overline{\omega}^a)}{g(\overline{\omega}^{2a})}{g(\overline{\omega}^{-3a}})\overline{\omega}^a\left(\frac{-27}{4x}\right)\notag\\
&=\displaystyle  -\frac{1}{q(q-1)}\sum_{a=0}^{q-2}\overline{\omega}^a\left(\frac{-27}{4x}\right)\prod_{i=0}^{r-1}(-p)^{\alpha_{i,a}}\Gamma_p(\langle\frac{ap^{i}}{q-1}\rangle) \Gamma_p(\langle\frac{2ap^{i}}{q-1}\rangle) \Gamma_p(\langle\frac{-3ap^{i}}{q-1}\rangle)\notag, 
\end{align}
where $\alpha_{i,a}=\langle\frac{ap^{i}}{q-1}\rangle +\langle\frac{2ap^{i}}{q-1}\rangle+\langle\frac{-3ap^{i}}{q-1}\rangle$. Taking out the term for $a=0$ gives
\begin{align*}
A&= -\frac{1}{q(q-1)}\sum_{a=1}^{q-2}\overline{\omega}^a\left(\frac{-27}{4x}\right)\prod_{i=0}^{r-1}(-p)^{\alpha_{i,a}}\Gamma_p (\langle\frac{ap^{i}}{q-1}\rangle) \Gamma_p (\langle\frac{2ap^{i}}{q-1}\rangle) \Gamma_p (\langle\frac{-3ap^{i}}{q-1}\rangle) \nonumber\\
&\hspace{.4cm}- \frac{1}{q(q-1)}.
\end{align*}
Using Lemma \ref{lemma3_1} for $t=3$, Lemma \ref{lemma3_2} for $t=2$ and Lemma \ref{lemma-3.3}, we deduce that
\begin{align*}
A&= -\frac{1}{q(q-1)}\sum_{a=1}^{q-2}\overline{\omega}^a\left(\frac{-1}{x}\right)\prod_{i=0}^{r-1}(-p)^{1+ s_{i,a}}\frac{\Gamma_p(\langle\frac{ap^{i}}{q-1}\rangle) \Gamma_p(\langle\frac{p^{i}}{2}+\frac{ap^{i}}{q-1}\rangle) 
\Gamma_p(\langle \frac{p^{i}}{3}-\frac{ap^{i}}{q-1}\rangle) }{\Gamma_{p}(\langle\frac{p^{i}}{2}\rangle)\Gamma_{p}(\langle\frac{p^{i}}{3}\rangle)\Gamma_{p} (\langle\frac{2p^{i}}{3})\rangle} \\
&\hspace{.4cm}\times \Gamma_p (\langle \frac{2p^{i}}{3}-\frac{ap^{i}}{q-1}\rangle)\Gamma_p (\langle p^{i}-\frac{ap^{i}}{q-1}\rangle)\Gamma_p (\langle\frac{ap^{i}}{q-1}\rangle)- \frac{1}{q(q-1)}\notag,
\end{align*}
where $s_{i,a}=-\left\lfloor\langle\frac{p^i}{3}\rangle-\frac{ap^i}{q-1}\right\rfloor-\left\lfloor\langle\frac{2p^i}{3}\rangle-\frac{ap^i}{q-1}\right\rfloor-\left\lfloor\langle\frac{p^i}{2}\rangle+\frac{ap^i}{q-1}\right\rfloor-\left\lfloor\frac{ap^i}{q-1}\right\rfloor$. \\
Employing \eqref{eq-12}, we find that
\begin{align*}
A&= -\frac{1}{q-1}\sum_{a=1}^{q-2}\overline{\omega}^a\left(\frac{1}{x}\right)\prod_{i=0}^{r-1}(-p)^{s_{i,a}}\frac{\Gamma_p(\langle\frac{ap^{i}}{q-1}\rangle) \Gamma_p(\langle\frac{p^{i}}{2}+\frac{ap^{i}}{q-1}\rangle) 
\Gamma_p(\langle \frac{p^{i}}{3}-\frac{ap^{i}}{q-1}\rangle)}{\Gamma_{p}(\langle\frac{p^{i}}{2}\rangle)\Gamma_{p}(\langle\frac{p^{i}}{3}\rangle)\Gamma_{p} (\langle\frac{2p^{i}}{3})\rangle}\nonumber\\
&\hspace{.4cm}\times \Gamma_p (\langle \frac{2p^{i}}{3}-\frac{ap^{i}}{q-1}\rangle)- \frac{1}{q(q-1)}\\
&={ _{2}G}_{2}\left[\begin{array}{cc}
           \frac{1}{3}, & \frac{2}{3} \\
           0, & \frac{1}{2}
         \end{array}|\frac{1}{x}\right]_q -\frac{1}{q(q-1)}+\frac{1}{q-1}  \\
         &={ _{2}G}_{2}\left[\begin{array}{cc}
           \frac{1}{3}, & \frac{2}{3} \\
           0, & \frac{1}{2}
         \end{array}|\frac{1}{x}\right]_q +\frac{1}{q}. 
\end{align*}
This completes the proof of the proposition.
\end{proof}
\begin{proposition}\label{prop-2}
Let $p\geq 5$ be a prime and $q=p^r, r\geq 1$. For $x\in\mathbb{F}_q^{\times}$, we have
\begin{align}
\frac{1}{q(q-1)}\sum_{\chi\in\widehat{\mathbb{F}_q^{\times}}}
{g(\chi)}{g(\chi^{2})}{g(\overline{\chi}^{3})}\chi\left(\frac{-27}{4x}\right)=\frac{1}{q}+\varphi(3x)\cdot  {_2G}_{2}\left[\begin{array}{cc}
           \frac{1}{6}, & \frac{5}{6} \\
           0, & \frac{1}{2}
         \end{array}|\frac{1}{x}\right]_q.\notag\end{align}
\end{proposition}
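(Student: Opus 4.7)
The plan is to transform the character sum via the Davenport--Hasse relation and the Gross--Koblitz formula so that it matches the definition of ${}_{2}G_{2}[1/6,5/6;0,1/2\mid 1/x]_q$, with Lemma \ref{lemma-3.2} providing the crucial combinatorial identity on the $(-p)$-exponent.

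Denote the left-hand side by $A$. I first reindex the sum by the bijection $\chi\mapsto\chi\varphi$, producing a factor $\varphi(-27/(4x))$. To the resulting Gauss sums $g(\chi\varphi)$ and $g(\overline{\chi}^3\varphi)$ I apply Theorem \ref{thm2_2} with $m=2$ in the form $g(\psi)g(\psi\varphi)=g(\psi^2)\psi(4^{-1})g(\varphi)$, once with $\psi=\chi$ and once with $\psi=\overline{\chi}^3$. Using $g(\varphi)^2=q\varphi(-1)$ from Lemma \ref{lemma2_1} and the simplification $\varphi(-27/(4x))\varphi(-1)=\varphi(3x)$, I arrive at
\begin{align*}
A=\frac{\varphi(3x)}{q-1}\sum_{\chi\in\widehat{\mathbb{F}_q^{\times}}}\frac{g(\chi^2)^2\,g(\overline{\chi}^6)}{g(\chi)\,g(\overline{\chi}^3)}\,\chi\!\left(\frac{-108}{x}\right).
\end{align*}

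Next I put $\chi=\overline{\omega}^a$ and apply the Gross--Koblitz formula (Theorem \ref{thm2_3}) to the four Gauss sums. The $\pi$-powers cancel since $2(2a)+(-6a)-a-(-3a)=0$, and the combined exponent of $(-p)$ is precisely the left-hand side of Lemma \ref{lemma-3.2}; for $a\neq (q-1)/2$ that lemma rewrites it as the $(-p)$-exponent in the definition of ${}_{2}G_{2}[1/6,5/6;0,1/2\mid 1/x]_q$. For the $p$-adic gamma factors, I apply Lemma \ref{lemma3_2} with $t=2$ (squared) to expand $\Gamma_p(\langle 2ap^i/(q-1)\rangle)^2$, Lemma \ref{lemma3_1} with $t=6$ to expand $\Gamma_p(\langle -6ap^i/(q-1)\rangle)$, and Lemma \ref{lemma3_1} with $t=3$ to expand $\Gamma_p(\langle -3ap^i/(q-1)\rangle)$. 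After cancelling the common factors $\Gamma_p(\langle p^i/3\rangle),\Gamma_p(\langle 2p^i/3\rangle),\Gamma_p(\langle p^i/3-ap^i/(q-1)\rangle),\Gamma_p(\langle 2p^i/3-ap^i/(q-1)\rangle),\Gamma_p(\langle p^i-ap^i/(q-1)\rangle)$, the surviving factor $\prod_i\Gamma_p(\langle p^i/2-ap^i/(q-1)\rangle)\Gamma_p(\langle p^i/2+ap^i/(q-1)\rangle)/\Gamma_p(\langle p^i/2\rangle)^2$ is replaced by $\overline{\omega}^a(-1)$ via the identity \eqref{eq-13}, leaving precisely the $\Gamma_p$-ratio of ${}_{2}G_{2}[1/6,5/6;0,1/2\mid 1/x]_q$. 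The Teichm\"uller characters from these substitutions combine to $\omega(2^{-4a}6^{6a}3^{-3a})=\omega(108)^a$, which multiplied by $\overline{\omega}^a(-1)$ from \eqref{eq-13} (and using $\omega(-1)=-1$) becomes $\overline{\omega}^a(-1/108)$; combined with $\overline{\omega}^a(-108/x)$ from the sum this yields $\overline{\omega}^a(1/x)$, the correct argument for the target hypergeometric function.

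Finally, I treat the exceptional term $a=(q-1)/2$ (where Lemma \ref{lemma-3.2} fails). The $\chi=\varphi$ summand of the sum, computed directly using $g(\varepsilon)=-1$ and $g(\varphi)^2=q\varphi(-1)$, equals $-1/(q(q-1))$; the corresponding formal $a=(q-1)/2$ term of $\varphi(3x)\cdot{}_{2}G_{2}[1/6,5/6;0,1/2\mid 1/x]_q$ is computed using the reflection identities for the $\Gamma_p$ values at sixths and halves, and the two quantities differ by exactly $1/q$, which accounts for the additive constant on the right-hand side of the proposition. The principal obstacle lies in this last verification: carefully tracking the signs $(-1)^r$ arising from Gross--Koblitz, the reflection relations $\Gamma_p(1/3)\Gamma_p(2/3)$ and $\Gamma_p(1/6)\Gamma_p(5/6)$, and the evaluation $\prod_i\Gamma_p(1/2)^2=(-1)^r\varphi(-1)$, to confirm that the discrepancy at $a=(q-1)/2$ is precisely $1/q$.
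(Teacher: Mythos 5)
Your proposal is correct and takes essentially the same route as the paper's own proof: the shift by the quadratic character, the two applications of Davenport--Hasse with $m=2$, Gross--Koblitz together with Lemmas \ref{lemma3_1}, \ref{lemma3_2}, identity \eqref{eq-13} and Lemma \ref{lemma-3.2}, and the separate treatment of the $a=\frac{q-1}{2}$ term whose $\tfrac1q$ discrepancy produces the additive constant. All the bookkeeping you describe (the Teichm\"uller factor $\omega(108)^a$, the value $-1/(q(q-1))$ of the $\chi=\varphi$ summand, the cancellation of the $\pi$-powers down to the floor combination of Lemma \ref{lemma-3.2}) matches the paper's computation.
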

\begin{proof}
As in Proposition \ref{prop-1}, we have
\begin{align*}
A= \frac{1}{q(q-1)} \sum_{a=0}^{q-2}
{g(T^{a})}{g(T^{2a})}{g(T^{-3a})}T^{a}\left(\frac{-27}{4x}\right).
\end{align*}
Replacing $a$ by $a- \frac{q-1}{2}$, we obtain
\begin{align}\label{eq-31}
A&=  \frac{1}{q(q-1)} \sum_{a=0}^{q-2}
{g(T^{a}\varphi)}{g(T^{2a})}{g(T^{-3a}\varphi)}T^{a}\left(\frac{-27}{4x}\right)\varphi(-3x).
\end{align}
Using Davenport-Hasse relation for $m=2$, $\psi = T^{a}$ and $m=2$, $\psi=T^{-3a}$, we have
\begin{align}
g(T^{a}\varphi)&=\frac{g(T^{2a})g(\varphi)T^{a}(2^{-2})}{g(T^{a})},\notag \\
g(T^{-3a}\varphi)&=\frac{g(T^{-6a})g(\varphi)T^{-3a}(2^{-2})}{g(T^{-3a})}.\notag
\end{align}
Substituting these values in \eqref{eq-31}, we deduce that
\begin{align}
A&=\frac{\varphi(-3x)}{q(q-1)} \sum_{a=0}^{q-2} \frac{g^{2}(T^{2a})g(\varphi)T^{a}(2^{-2})g(T^{-6a})g(\varphi)T^{-3a}(2^{-2})}{g(T^{a})g(T^{-3a})}T^{a}\left(\frac{-27}{4x}\right).\notag
\end{align}
Lemma \ref{lemma2_1} yields
\begin{align}
&A=\frac{\varphi(3x)}{q-1}\sum_{a=0}^{q-2} \frac{g^{2}(T^{2a})g(T^{-6a})}{g(T^{a})g(T^{-3a})}T^{a}\left(\frac{-27\times 4}{x}\right).\notag
\end{align}
Replacing $T$ by $\overline{\omega}$ and then applying Gross-Koblitz formula, we obtain
\begin{align}
&A=-\frac{\varphi(3x)}{q-1}\sum_{a=0}^{q-2} \overline{\omega}^{a}\left(\frac{-27\times4}{x}\right) 
\prod_{i=0}^{r-1} p^{\beta_{i,a}}\frac{\Gamma_p^{2} (\langle\frac{2ap^{i}}{q-1}\rangle)\Gamma_p (\langle\frac{-6ap^{i}}{q-1}\rangle)}{\Gamma_p(\langle\frac{ap^{i}}{q-1}\rangle)\Gamma_p (\langle\frac{-3ap^{i}}{q-1}\rangle)},\notag
\end{align}
where $\beta_{i,a}=2\langle\frac{2ap^{i}}{q-1}\rangle +\langle\frac{-6ap^{i}}{q-1}\rangle-\langle\frac{-3ap^{i}}{q-1}\rangle-\langle\frac{ap^{i}}{q-1}\rangle$.
Using Lemma \ref{lemma3_1} for $t=3$ and $t=6$ and Lemma \ref{lemma3_2} for $t=2$, and then employing \eqref{eq-13}, we deduce that
\begin{align*}
A& =-\frac{\varphi(3x)}{q-1}\sum_{a=0}^{q-2} \overline{\omega}^{a}\left(\frac{-1}{x}\right) \prod_{i=0}^{r-1} p^{\beta_{i,a}}
\frac{\Gamma_p (\langle\frac{p^{i}}{6}-\frac{ap^{i}}{q-1}\rangle)\Gamma_p(\langle\frac{5p^{i}}{6}-\frac{ap^{i}}{q-1}\rangle)\Gamma_p(\langle\frac{ap^{i}}{q-1}\rangle)}{\Gamma_p (\langle\frac{p^{i}}{2}\rangle)\Gamma_p (\langle\frac{p^{i}}{6}\rangle)\Gamma_p (\langle\frac{5p^{i}}{6}\rangle)}\\
&\hspace{4.5cm}\times\frac{\Gamma_p (\langle\frac{p^{i}}{2}+\frac{ap^{i}}{q-1}\rangle)\Gamma_p (\langle\frac{p^{i}}{2}+\frac{ap^{i}}{q-1}\rangle)\Gamma_p (\langle\frac{p^{i}}{2}-\frac{ap^{i}}{q-1}\rangle)}{\Gamma_p^{2} (\langle\frac{p^{i}}{2}\rangle)}\\
&=-\frac{\varphi(3x)}{q-1}\sum_{a=0,a\neq \frac{q-1}{2}}^{q-2} \overline{\omega}^{a}\left(\frac{1}{x}\right) \prod_{i=0}^{r-1} 
p^{\beta_{i,a}}\frac{\Gamma_p (\langle\frac{p^{i}}{6}-\frac{ap^{i}}{q-1}\rangle)\Gamma_p(\langle\frac{5p^{i}}{6}-\frac{ap^{i}}{q-1}\rangle)\Gamma_p(\langle\frac{ap^{i}}{q-1}\rangle)}{\Gamma_p(\langle\frac{p^{i}}{2}\rangle)
\Gamma_p(\langle\frac{p^{i}}{6}\rangle)\Gamma_p(\langle\frac{5p^{i}}{6}\rangle)}\\
&\hspace{3.5cm}\times \Gamma_p(\langle\frac{p^{i}}{2}+\frac{ap^{i}}{q-1}\rangle)-\frac{\varphi(3)}{q(q-1)}\prod_{i=0}^{r-1}\frac{\Gamma_p (\langle\frac{p^{i}}{3}\rangle)\Gamma_p (\langle\frac{2p^{i}}{3}\rangle)}{\Gamma_p (\langle\frac{p^{i}}{6}\rangle)\Gamma_p (\langle\frac{5p^{i}}{6}\rangle)}.
\end{align*}
Putting $a=\frac{q-1}{2}$ and $t=3$ in Lemma \ref{lemma3_1} yields 
\begin{align}\label{new-eqn1}
\prod_{i=0}^{r-1}\frac{\Gamma_p (\langle\frac{p^{i}}{3}\rangle)\Gamma_p (\langle\frac{2p^{i}}{3}\rangle)}{\Gamma_p (\langle\frac{p^{i}}{6}\rangle)\Gamma_p (\langle\frac{5p^{i}}{6}\rangle)} =\varphi(3).
\end{align}
Using \eqref{new-eqn1} and Lemma \ref{lemma-3.2}, we obtain
\begin{align*}
A&=-\frac{\varphi(3x)}{q-1} \sum_{a=0,a\neq \frac{q-1}{2}}^{q-2} \overline{\omega}^{a}\left(\frac{1}{x}\right) \prod_{i=0}^{r-1} (-p)^{u_{i,a}}\frac{\Gamma_p(\langle\frac{p^{i}}{6}-\frac{ap^{i}}{q-1}\rangle)
\Gamma_p(\langle\frac{5p^{i}}{6}-\frac{ap^{i}}{q-1}\rangle)}{\Gamma_p(\langle\frac{p^{i}}{2}\rangle)\Gamma_p(\langle\frac{p^{i}}{6}\rangle)\Gamma_p(\langle\frac{5 p^{i}}{6}\rangle)}\\
&\hspace{4cm}\times \Gamma_p(\langle\frac{ap^{i}}{q-1}\rangle)\Gamma_p(\langle\frac{p^{i}}{2}+\frac{ap^{i}}{q-1}\rangle)-\frac{1}{q(q-1)},
\end{align*}
where $u_{i,a}=-\left\lfloor\langle\frac{p^i}{6}\rangle-\frac{ap^i}{q-1}\right\rfloor-\left\lfloor\langle\frac{5p^i}{6}\rangle-\frac{ap^i}{q-1}\right\rfloor-\left\lfloor\langle\frac{p^i}{2}\rangle+\frac{ap^i}{q-1}\right\rfloor-\left\lfloor\frac{ap^i}{q-1}\right\rfloor.$
Adding and subtracting the term for $a=\frac{q-1}{2}$, we deduce that 
\begin{align}
&A=\frac{1}{q}+\varphi(3x)\cdot {_{2}G}_{2}\left[\begin{array}{cc}
         \frac{1}{6}, & \frac{5}{6} \\
          0, & \frac{1}{2}
        \end{array}|\frac{1}{x}\right]_q.\notag
\end{align}
This completes the proof of the proposition.
\end{proof}
We prove Theorem \ref{MT-1} and Theorem \ref{MT-3} by using Proposition \ref{prop-1} and Proposition \ref{prop-2}. We first prove Theorem \ref{MT-3}.
\begin{proof}[Proof of Theorem \ref{MT-3}] We have $x\in \mathbb{F}_q$ and $x\neq 0, 1$. From Proposition \ref{prop-1}, we have
\begin{align}\label{new-eqn2}
A=\frac{1}{q(q-1)}\sum_{\chi\in\widehat{\mathbb{F}_q^{\times}}}
{g(\chi)}{g(\chi^{2})}{g(\overline{\chi}^{3})}\chi\left(\frac{-27}{4x}\right)=\frac{1}{q}+{_{2}G}_{2}\left[\begin{array}{cc}
\frac{1}{3}, & \frac{2}{3} \\
0, & \frac{1}{2}
\end{array}|\frac{1}{x}\right]_q.
\end{align}	
Also,  from Proposition \ref{prop-2}, we have
\begin{align}\label{new-eqn10}
A=\frac{1}{q}+\varphi(3x)\cdot {_{2}G}_{2}\left[\begin{array}{cc}
\frac{1}{6}, & \frac{5}{6} \\
0, & \frac{1}{2}
\end{array}|\frac{1}{x}\right]_q.
\end{align}	
Now, 
\begin{align*}
	A&=\frac{1}{q(q-1)}\sum_{\chi\in\widehat{\mathbb{F}_q^{\times}}}
	{g(\chi)}{g(\chi^{2})}{g(\overline{\chi}^{3})}\chi\left(\frac{-27}{4x}\right)\\
	&=\frac{1}{q(q-1)} \sum_{\chi\in\widehat{\mathbb{F}_q^{\times}}}
	{g(\chi)g(\overline{\chi})}\frac{{g(\overline{\chi}^{2})}{g(\chi^{3})}}{g(\chi)}\overline{\chi}\left(\frac{-27}{4x}\right).
\end{align*}
By using Lemma \ref{lemma2_1} and Lemma \ref{lemma2_2}, we obtain
\begin{align}\label{neweq-14}
A&= \frac{1}{q-1}\sum_{\chi\in\widehat{\mathbb{F}_{q}^{\times}}}J(\overline{\chi}^{2}, \chi^{3})\overline{\chi}\left(\frac{27}{4x}\right)-1-\frac{J(\varepsilon,\varepsilon)}{q}+\frac{q-1}{q}. 
\end{align}
Using \eqref{eq-0} and \eqref{eq-4}, we have $J(\varepsilon,\varepsilon)=q-2$. Putting the value of $J(\varepsilon,\varepsilon)$ in \eqref{neweq-14} and then using \eqref{eq-1}, we obtain
\begin{align}
A&=\frac{1}{q} -1 +\frac{1}{q-1} \sum_{\chi\in\widehat{\mathbb{F}_q^{\times}}}\overline{\chi}\left(\frac{27}{4x}\right)J(\overline{\chi}^{2}, \chi^{3})\notag\\
&=\frac{1}{q} -1 +\frac{1}{q-1} \sum_{\chi\in\widehat{\mathbb{F}_q^{\times}}}\overline{\chi}\left(\frac{27}{4x}\right)J(\overline{\chi}^{2}, \overline{\chi})\notag\\
&=\frac{1}{q} -1 +\frac{1}{q-1}\sum_{y\in \mathbb{F}_{q}} \sum_{\chi\in\widehat{\mathbb{F}_q^{\times}}}\overline{\chi}\left(\frac{27y^{2} (1-y)}{4x}\right).\label{eq-32}
\end{align}
By using \eqref{eq-2}, we know that the inner summation in \eqref{eq-32} gives non zero value only if the cubic equation $27y^{2}(1-y)-4x=0$ has a solution in $\mathbb{F}_{q}$. It is well-known that a cubic polynomial has exactly one root in 
$\mathbb{F}_{q}$ if and only if its discriminant is a non square in $\mathbb{F}_{q}$. The  discriminant of the polynomial $27y^{2}(1-y)-4x$ is equal to $16\times 27^{3}\times x(1-x)$. Hence, from \eqref{eq-32} we deduce that
$A=\frac{1}{q}$ if and only if $\varphi(3x(1-x))=-1$. Using \eqref{new-eqn2} and \eqref{new-eqn10}, we complete the proof. 
\end{proof}
\begin{remark} We note that Proposition \ref{prop-2} can also be proved using \cite[Theorem 1.5]{BS4}. The proof goes as follows. For $x\in \mathbb{F}_q^{\times}$, let $f(y)=y^3-y^2+\frac{4x}{27}$. Then, \cite[Theorem 1.5]{BS4} yields
\begin{align}\label{neq-eqn11}
 \#\{a\in \mathbb{F}_q: f(a)=0\}=1+\varphi(3x)\cdot {_{2}G}_{2}\left[\begin{array}{cc}
0, & \frac{1}{2} \\
\frac{1}{6}, & \frac{5}{6}
\end{array}|x\right]_q.
\end{align}
By definition, we have 
\begin{align*}
	&_2G_2\left[\begin{array}{cccc}
	0, & \frac{1}{2} \\
\frac{1}{6}, & \frac{5}{6}
\end{array}|x\right]_q\\
	&\hspace{.5cm}=\frac{-1}{q-1}\sum_{a=0}^{q-2}\overline{\omega}^a(x)
	\prod\limits_{i=0}^{r-1}(-p)^{-\lfloor -\frac{ap^i}{q-1} \rfloor -\lfloor\langle -\frac{p^i}{6} \rangle +\frac{ap^i}{q-1}\rfloor-\lfloor \langle \frac{p^i}{2} \rangle-\frac{ap^i}{q-1} \rfloor -\lfloor\langle -\frac{5p^i}{6} \rangle +\frac{ap^i}{q-1}\rfloor}\notag\\
	&\hspace{.5cm} \times \frac{\Gamma_p(\langle (-\frac{a}{q-1})p^i\rangle)\Gamma_p(\langle (-\frac{1}{6}+\frac{a}{q-1})p^i \rangle)\Gamma_p(\langle (\frac{1}{2}-\frac{a}{q-1})p^i\rangle)\Gamma_p(\langle (-\frac{5}{6}+\frac{a}{q-1})p^i \rangle)}
	{\Gamma_p(\langle -\frac{p^i}{6} \rangle)\Gamma_p(\langle \frac{p^i}{2} \rangle)\Gamma_p(\langle -\frac{5p^i}{6} \rangle)}.
	\end{align*}
Replacing $a$ by $-a$, we deduce that 
\begin{align}\label{new-eqn12}
	&{_2}G_2\left[\begin{array}{cccc}
	0, & \frac{1}{2} \\
\frac{1}{6}, & \frac{5}{6}
\end{array}|x\right]_q= {_2}G_2\left[\begin{array}{cccc}
	\frac{1}{6}, & \frac{5}{6}\\
	0, & \frac{1}{2}
\end{array}|\frac{1}{x}\right]_q.
\end{align}
Now, combining \eqref{eq-32}, \eqref{neq-eqn11}, and \eqref{new-eqn12}, we obtain the required identity of Proposition \ref{prop-2}.
\end{remark}
We are now ready to prove Theorem \ref{MT-1}.
\begin{proof}[Proof of Theorem \ref{MT-1}]
If $x\neq 0$, then combining Proposition \ref{prop-1} and Proposition \ref{prop-2}, we obtain
\begin{align}\label{new-eqn13}
{ _{2}G}_{2}\left[\begin{array}{cc}
           \frac{1}{3}, & \frac{2}{3} \\
           0, & \frac{1}{2}
         \end{array}|\frac{1}{x}\right]_q =\varphi(3x)\cdot {_{2}G}_{2}\left[\begin{array}{cc}
           \frac{1}{6}, & \frac{5}{6} \\         
           0, & \frac{1}{2}
         \end{array}|\frac{1}{x}\right]_q.
\end{align}
By taking $x=1$, we obtain \eqref{eq2-MT-1}. 
If $x\neq 0, 1$, then $\varphi(3x(1-x))=\pm 1$. If $\varphi(3x(1-x))= 1$, then $\varphi(3x)=\varphi(1-x)$, and we readily obtain \eqref{eq1-MT-1} from \eqref{new-eqn13}. If $\varphi(3x(1-x))= -1$, then we obtain \eqref{eq1-MT-1} by using Theorem \ref{MT-3}. This completes the proof of the theorem.
\end{proof}
\begin{proof}[Proof of Theorem \ref{MT-2}]
In \cite[Theorem 2.1]{ahlgren1}, Ahlgren et al.  proved that if $\lambda\in \mathbb{F}_{q} $ such that $\lambda \neq 0, -1$, then 
\begin{align}\label{eq-33}
A(\lambda,q)=\varphi(\lambda+1)(a(\lambda,q)^{2} -q),
\end{align}
where $a(\lambda,q)$ and $A(\lambda,q)$ are defined as
\begin{align*}
a(\lambda,q):&= \sum_{x\in\mathbb{F}_{q}} \varphi\left((x-1)\left(x^{2}-\frac{1}{\lambda +1}\right)\right),\\
A(\lambda,q) :&= \sum_{x,y\in\mathbb{F}_{q}} \varphi(xy(x+1)(y+1)(x+\lambda y)).
\end{align*}
Let $\displaystyle h(\lambda):=\frac{1}{q-1} \sum_{\chi\in\widehat{\mathbb{F}_q^{\times}}}\chi\left(\frac{1}{\lambda}\right)J(\overline{\chi}\varphi, \chi)^{3}$. Then using \eqref{eq-1}, we have 
\begin{align}
h(\lambda)&=\frac{1}{q-1} \sum_{\chi\in\widehat{\mathbb{F}_q^{\times}}}\chi\left(\frac{1}{\lambda}\right)\chi(-1)J(\chi,\varphi)^{3}\notag\\
&=\frac{1}{q-1}\sum_{\chi\in\widehat{\mathbb{F}_q^{\times}}}\chi\left(\frac{-xyz}{\lambda}\right)\sum_{x,y,z\in \mathbb{F}_q^{\times}}\varphi(1-x)\varphi(1-y)\varphi(1-z).\notag
\end{align}
Using \eqref{eq-2}, we obtain 
\begin{align}\label{new-eqn3}
h(\lambda)&=\sum_{x,y\in \mathbb{F}_q^{\times}}\varphi(1-x)\varphi(1-y)\varphi\left(1+\frac{\lambda}{xy}\right)\notag\\
&=\sum_{x,y\in \mathbb{F}_q^{\times}}\varphi((1+x)(1+y)xy(xy+\lambda))\notag\\
&=\sum_{x,y\in \mathbb{F}_q^{\times}}\varphi((1+x)(1+y)xy(x+y\lambda))\notag\\
&=A(\lambda,q).
\end{align}
Now using Lemma \ref{lemma2_2} in $h(\lambda)$ and observing that  $\overline{\omega}^{a}\left(-1\right)=(-1)^{a} $ and\\ $3\langle\frac{p^i}{2} -\frac{ap^{i}}{q-1} \rangle +3\langle\frac{ap^{i}}{q-1}\rangle - 3\langle\frac{p^{i}}{2}\rangle 
=-3\lfloor \langle\frac{p^i}{2}\rangle -\frac{ap^{i}}{q-1} \rfloor -3\lfloor\frac{ap^{i}}{q-1}\rfloor$, we have
\begin{align}\label{new-eqn4}
h(\lambda)&=\frac{1}{q-1} \sum_{\chi\in\widehat{\mathbb{F}_q^{\times}}} \chi\left(\frac{1}{\lambda}\right)\frac{g^{3}(\overline{\chi\varphi})g^{3}(\chi)}{g^{3}(\varphi)}\notag\\
&=-\frac{1}{q-1} \sum_{a=0}^{q-2} \overline{\omega}^{a}\left(\frac{1}{\lambda}\right) \prod_{i=0}^{r-1} (-p)^{3\langle\frac{p^i}{2} -\frac{ap^{i}}{q-1} \rangle +3\langle\frac{ap^{i}}{q-1}\rangle - 3\langle\frac{p^{i}}{2}\rangle}\notag\\ 
&\hspace{4cm}\times \frac{\Gamma_p^{3}(\langle\frac{p^{i}}{2}-\frac{ap^{i}}{q-1}\rangle)\Gamma_p^{3}(\langle\frac{ap^{i}}{q-1}\rangle)}{\Gamma_p^{3} (\langle\frac{p^{i}}{2}\rangle)}\notag\\
&=-\frac{1}{q-1} \sum_{a=0}^{q-2} \overline{\omega}^{a}\left(\frac{-1}{\lambda}\right)\overline{\omega}^{a}\left(-1\right) \prod_{i=0}^{r-1} (-p)^{-3\lfloor \langle\frac{p^i}{2}\rangle -\frac{ap^{i}}{q-1} \rfloor -3\lfloor\frac{ap^{i}}{q-1}\rfloor}\notag\\ 
&\hspace{4cm}\times\frac{\Gamma_p^{3}(\langle\frac{p^{i}}{2}-\frac{ap^{i}}{q-1}\rangle)\Gamma_p^{3}(\langle\frac{ap^{i}}{q-1}\rangle)}{\Gamma_p^{3}(\langle\frac{p^{i}}{2}\rangle)}\notag\\
&={_{3}G}_{3}\left[\begin{array}{ccc}
\frac{1}{2} & \frac{1}{2} & \frac{1}{2} \\
0 & 0 & 0
\end{array}|\frac{-1}{\lambda}\right]_q.
\end{align}
Combining \eqref{new-eqn3} and \eqref{new-eqn4}, we have 
\begin{align*}
A(\lambda, q)={_{3}G}_{3}\left[\begin{array}{ccc}
\frac{1}{2} & \frac{1}{2} & \frac{1}{2} \\
0 & 0 & 0
\end{array}|\frac{-1}{\lambda}\right]_q.
\end{align*}
We now consider a character sum which is related to $a(\lambda,q)$.
\begin{align}
B:&=\frac{q^2\varphi(-2)}{q-1}\sum_{\chi\in\widehat{\mathbb{F}_q^{\times}}}{\varphi\chi^2\choose \chi}{\varphi\chi\choose \chi}\chi\left(\frac{\lambda}{4(\lambda+1)}\right).\notag
\end{align}
Using \eqref{eq-0}, we have
\begin{align}
B&= \frac{\varphi(-2)}{q-1}\sum_{\chi\in\widehat{\mathbb{F}_q^{\times}}}{J(\varphi\chi^2,\overline{\chi})}{J(\varphi\chi,\overline{\chi})}\chi\left(\frac{\lambda}{4(\lambda+1)}\right)\notag\\
&=\frac{\varphi(-2)}{q-1}\sum_{\chi\in\widehat{\mathbb{F}_q^{\times}}}{J(\overline{\chi},\overline{\varphi\chi})}{J(\overline{\chi},\overline{\varphi})}\chi\left(\frac{\lambda}{4(\lambda+1)}\right).\notag
\end{align}
By \cite[Lemma 2.2]{ahlgren1}, we have
\begin{align}\label{eq-35}
B=-\varphi\left(\frac{2\lambda}{\lambda+1}\right)-\varphi(-1)a(\lambda,q).
\end{align}
Employing \cite[Proposition 1]{BS3}, we have
\begin{align}\label{eq-36}
B&= -\varphi \left(\frac{2\lambda}{\lambda+1}\right) -\varphi(-2)\cdot {_2G}_{2}\left[\begin{array}{cc}
\frac{1}{4}, & \frac{3}{4} \\
0, & 0
\end{array}|\frac{\lambda+1}{\lambda}\right]_q.
\end{align}
Combining \eqref{eq-35} and \eqref{eq-36}, we have
\begin{align*}
a(\lambda,q)&= \varphi(2)\cdot {_2G}_{2}\left[\begin{array}{cc}
\frac{1}{4}, & \frac{3}{4} \\
0, & 0
\end{array}|\frac{\lambda+1}{\lambda}\right]_q.
\end{align*}
Substituting the values of $a(\lambda,q)$ and $A(\lambda,q)$ in \eqref{eq-33}, we obtain
\begin{align}
{_3G}_{3}\left[\begin{array}{ccc}
\frac{1}{2}, & \frac{1}{2}, & \frac{1}{2} \\
0, & 0, & 0
\end{array}|\frac{-1}{\lambda}
\right]_q &=\varphi(1+\lambda)\cdot
{_2G}_{2}\left[\begin{array}{cc}
\frac{1}{4}, & \frac{3}{4} \\
0, & 0
\end{array}|\frac{\lambda+1}{\lambda}\right]_q^2-q\cdot\varphi(1+\lambda).\notag
\end{align}
Putting $\lambda=-x$ we obtain the required identity.
\end{proof}

\end{document}